   \newtheorem{Thm}{Theorem}[subsection]
   \newtheorem{Def}[Thm]{Definition}
\newcommand{\R}{\mathbb{R}}
\newcommand{\N}{\mathbb{N}}
\newcommand{\Z}{\mathbb{Z}}
\newcommand{\C}{\mathbb{C}}
\newcommand{\V}{\mathbb{V}}
\newcommand{\M}{\mathbb{M}}
\newcommand{\dbar}{{d\hspace{-0,05cm}\bar{}\hspace{0,05cm}}}
\newcommand{\Op}{\textup{Op}}
\newcommand{\op}{\textup{op}}
\newcommand{\cl}{\textup{cl}}
\newcommand{\g}{\textbf{g}}
\newcommand{\ci}{C^\infty}
\newcommand{\Hsum}[2]{\begin{matrix} #1 \\ \oplus \\
     #2 \end{matrix}}
\begin{document}
%
\title[Parameter-dependent Edge Operators]
{Parameter-dependent Edge Operators}
\author{C.-I. Martin}

\address{%
UCLA Mathematics Department
Los Angeles, CA 90095-1555\\
USA}

\email{cmartin@math.ucla.edu}

\author{B.-W. Schulze}
\address{%
Am Neuen Palais 10\\
14469 Potsdam\\
Germany}
\email{schulze@math.uni-potsdam.de}
\subjclass{Primary 35S35; Secondary 35J70}

\keywords{edge pseudo-differential operators, singular exits to infinity, \\ 
spaces on singular manifolds with double weights, ellipticity of conormal symbols}


\begin{abstract}
We study parameter-dependent operators on a manifold with edge and construct new classes of elliptic elements in the corner calculus on an infinite cone with a singular base.

\end{abstract}

\maketitle

\section{Introduction}
The ellipticity of a (pseudo-)differential operator $A$ on a manifold $M$  with singularities (more precisely, a stratified space with strata of conical, edge, boundary, corner, $\dots$, type) is a condition on the components of the principal symbolic hierarchy $\sigma(A)=(\sigma_j(A))_{0\leq j\leq k}$ belonging to $A.$ Here $k\in\N$ is the order of the singularity where $k=0$ corresponds to smoothness, $k=1$ to the conical, edge, or boundary case, etc., cf. \cite{Schu27}, \cite{Schu57}. Let $s(M)=(s_j(M))_{0\leq j\leq k}$ denote the sequence of strata of $M.$ The component $\sigma_0(A)$ has the meaning of the standard homogeneous principal symbol of $A$; it is a function in $\ci(T^*s_0(M)\setminus0)$ where $s_0(M)$ is the main stratum of $M$ (the one of maximal dimension). Every $\sigma_j(A)$ for $j\geq 1$ is a family of operators between suitable distribution spaces on a manifold with singularities of order $j-1,$ determined by the corresponding $\sigma_j(M).$ By ellipticity of $A$ we understand the pointwise invertibility of $\sigma_j(A)$ (and of a certain reduced symbol $\tilde{\sigma}_j(A)$) for all $j$. We also talk about ($\sigma_j)_{0\leq j\leq i}$-ellipticity when the bijectivities refer to $0\leq j\leq i.$ To assess the solvability of a $\sigma_0$-elliptic equation it is essential to observe also the other symbolic components (e.g., on a manifold with boundary it is well-known that the boundary symbol $\sigma_1(A)$ is responsible for elliptic boundary conditions). It can be very difficult to decide whether a $\sigma_0$- or ($\sigma_j)_{0\leq j\leq i}$- elliptic operator is also elliptic with respect to the remaining $(\sigma_j)_{i+1\leq j\leq k}$ (which is, of course, not the case in general). The articles \cite{Dine4} and \cite{Dine3} answer such a question in the case when edge or (second order) corner singularities are embedded in a smooth manifold.\\The present paper gives explicit constructions of new classes of elliptic operators on an infinite cone $M:=B^\Delta=(\overline{\R}_+\times B)/(\{0\}\times B)$ with a base $B$ which is a compact manifold with edges. The space $B^\Delta$ is singular of order $k=2.$ We mainly focus on $\sigma_2$ which is associated with the corner point, represented by $\{0\}\times B$ in the quotient space. Given any prescribed corner weight we show that a $(\sigma_0,\sigma_1)$-elliptic operator admits a $\sigma$-elliptic representative in the class modulo so-called smoothing Mellin operators in the algebra of corner pseudo-differential operators. In addition if an operator in our calculus is only $\sigma_0$-elliptic we obtain $\sigma_1$-ellipticity locally in any neighbourhood of an edge point on $M,$ using a result of \cite{Mali1} for the case of a cone with smooth base.\\Assume for the moment that $M$ is a manifold with conical singularity $\{c\},$ locally near $\{c\}$ modelled on $X^\Delta$ for a closed $\ci$ manifold $X.$ Then by cone calculus on $M$ we understand a substructure of $L^m_{\textup{cl}}(M\setminus\{c\})$ consisting of operators $A$ that are $\textup{mod}\,L^{-\infty}(M\setminus\{c\})$ in the splitting of variables $(r,x)\in\R_+\times X=:X^\wedge$ of the form $r^{-m}\mbox{Op}_r(p),\, \mbox{Op}_r(p):= F^{-1}p(r,\rho)F,$ with $F=F_{r\rightarrow\rho}$ being the one-dimensional Fourier transform, and $p(r,\rho)=\tilde{p}(r,r\rho),\, \tilde{p}(r,\tilde{\rho})\in L^m_{\textup{cl}}(X;\R_{\tilde{\rho}})$ (cf. the notation in subsection 1.1 below). The smoothing remainders will be specified in the context of a suitable Mellin quantisation in $r,$ and other contributions. 
Examples are differential operators of Fuchs type  
\begin {equation} \label{ecC}
A=r^{-m}\sum_{j=0}^ma_j(r)( -r\partial_r)^j, 
\end {equation}
$a_j\in \ci (\overline{\R}_+, \mbox{Diff}^{m -j}(X))$; here $\mbox{Diff}^{\nu }(X)$ is the space of all differential operators on $X$ of order $\nu$ with smooth coefficients. Recall that when $g _X$ is a Riemannian metric on $X$ the Laplace-Beltrami operator belonging to the cone metric $dr^2+r^2g _X$ on $X^\wedge \ni (r,x)$ is of the form $\eqref{ecC}$ (for $m =2).$ Similarly, if we consider a (stretched) wedge $X^\wedge \times \Omega \ni (r,x,y),\,\Omega\subseteq\R^q$ open, and a wedge metric $dr^2+r^2g_X+dy^2,$ the associated Laplace-Beltrami operator is of the form   
\begin{equation} \label{ecE}
A=r^{-m}\sum_{j+|\alpha |\leq m}a_{j\alpha }(r,y)( -r\partial_r)^j(rD_y)^\alpha 
\end{equation}
(for $m =2$) with coefficients $a_{j\alpha }\in \ci(\overline{\R}_+\times \Omega , \mbox{Diff}^{m -(j+|\alpha |)}(X))$. Operators  
$\eqref{ecE}$ are edge-degenerate, i.e., of the form
\begin{equation} \label{ecF}r^{-m} \mbox{Op}_{r,y}(p), \end{equation}
for $ \mbox{Op}_{r,y}(p)u(r,y)= F^{-1}_{(\rho,\eta)\rightarrow (r,y)} p(r,y,\rho ,\eta )(F_{(r,y)\rightarrow (\rho,\eta)}u))(r,y),\,p(r,y,\rho ,\eta )=\tilde{p}(r,y,r\rho ,r\eta ),$ for a $ \tilde{p}(r,y,\tilde{\rho}, \tilde{\eta }) \in \ci(\overline{\R}_+\times \Omega, L_\cl ^m (X;\R_{\tilde{\rho}, \tilde{\eta }}^{1+q }))$ and the Fourier transform $F_{(r,y)\rightarrow(\rho,\eta)}$ in the variables $(r,y)\in \R\times\R^q$. Operators $\eqref{ecF}$ are the raw material of the edge calculus on $X^\wedge\times\Omega,$ a substructure of $L^m_{\textup{cl}}(X^\wedge\times\Omega).$  More generally, if $M$ is a manifold with edge $Y,$ locally near $Y$ modelled on wedges $X^\Delta\times\Omega,$ then the edge calculus consists of operators in $L^m_{\textup{cl}}(M\setminus Y)$ that are edge-degenerate near $Y,$ modulo some specific smoothing operators. Apart from the standard homogeneous principal symbol $\sigma_0(A)$ of an operator $A$ in the edge calculus, close to $Y$ we have the parameter-dependent homogeneous principal symbol $\tilde{p}_{(m)}(r,x,y,\tilde{\rho},\xi,\tilde{\eta})$ of the above-mentioned $\tilde{p}$ (with $x$ being local coordinates on $X$ with covariables  $\xi).$ Then we have $r^m\sigma_0(A)(r,x,y,\rho,\xi,\eta)=\tilde{p}_{(m)}(r,x,y,r\rho,\xi,r\eta) =: \tilde{\sigma}_0(A)(r,x,y,\rho,\xi,\eta)$ (which is valid including $q=0).$ Moreover, in the case of  differential operators $\eqref{ecC},$ $\eqref{ecE},$ the definition of $\sigma_1(A)$ for $q>0$ is
\begin{equation} \label{sy1}
\sigma_1(A)(y,\eta):=r^{-m}\sum_{j+|\alpha |\leq m}a_{j\alpha }(0,y)( -r\partial_r)^j(r\eta)^\alpha\,:\mathcal{K}^{s,\gamma}(X^\wedge)\rightarrow\mathcal{K}^{s-m,\gamma-m}(X^\wedge),
\end{equation}
$(y,\eta)\in T^*\Omega\setminus0,$ called the (principal) edge symbol of $A,$ and for $q=0$ $$\sigma_1(A)(z)=\sum_{j=0}^ma_j(0)z^j:H^s(X)\rightarrow H^{s-m}(X),$$ $z\in\Gamma_{(n+1)/2-\gamma},$ called the (principal) conormal symbol of $A,$ for $\Gamma_{(n+1)/2-\gamma}:=\{z\in\C: \textup{Re}\,z=(n+1)/2-\gamma\},$ also referred to as the weight line of weight $\gamma $. Moreover, $H^s(X)$ are the standard Sobolev spaces on $X$, while $\mathcal{K}^{s,\gamma}(X^\wedge)$ are weighted spaces of smoothness $s$ and weight $\gamma$ on the open stretched cone $X^\wedge$, cf. subsection 1.2 below.\\
We study here a number of new properties of parameter-dependent operators on a manifold with edge, in particular, with respect of their role of amplitude functions for the calculus on an infinite cone with singular base $M,$ with a holomorphic
dependence on parameters. Those occur, in particular, as conormal symbols $h$ of operators on a corner manifold. Under ellipticity with respect to the imaginary part of the complex covariable as parameter we show that for every corner weight $\delta \in \R$ there is a smoothing Mellin symbol $f$, such that $h+f$ is bijective on the weight line of weight $\delta$. The  pointwise inverses as operators between weighted edge spaces are meromorphic Fredholm families; those occur as conormal symbols of parametrices. Our result also extends to the meromorphic case.

\section{Edge calculus with parameters}
\label{cylsec1}
\subsection{Edge-degenerate operators}
Parameter-dependent operators on smooth manifolds in different contexts have been considered by many authors, see Agranovich and Vishik \cite{Agra1}, Kondratyev \cite{Kond1}, Grubb \cite{Grub1}. Here we refer to the edge calculus of \cite{Schu32}.\\
Let us first fix some notation on pseudo-differential operators on a $\ci$ manifold $M$. By $L^m_{\textup{cl}}(M;\R^l), m\in\R,$ we denote the space of classical parameter-dependent pseudo-differential operators of order $m\in\R$ on $M,$ with parameters $\lambda\in\R^l.$ More precisely, modulo a smoothing operator family, every $A\in L^m_{\textup{cl}}(M;\R^l)$ is locally of the form $A(\lambda)= F^{-1}a(x,\xi,\lambda)F$ for a classical symbol $a(x,\xi,\lambda)$ in $(\xi,\lambda)$ of order $m$ where $F=F_{x\to\xi}$ is the Fourier transform in $x\in\R^n, n=\textup{dim}\,M.$ The space of parameter-dependent smoothing operators $L^{-\infty}(M;\R^l)$ is defined as $\mathcal{S}(\R^l,L^{-\infty}(M))$ with $L^{-\infty}(M)$ being the space of smoothing operators on $M;$ the correspondence to kernels in $\ci(M\times M),$ refers to a fixed Riemannian metric on $M.$ \\ Parameter-dependent ellipticity of A means that the homogeneous principal symbol $a_{(m)}(x,\xi,\lambda)$
(of homogeneity $m$ in $(\xi,\lambda)\ne 0)$ never vanishes. It is known that then $A$ has a parametrix $P\in L^{-m}_{\textup{cl}}(M;\R^l),\,1-PA,\,1-AP \in L^{-\infty}(M;\R^l).$ We will employ the well-known fact that when $M$ is compact for every $m$ there is a parameter-dependent elliptic $R^m\in L^m_{\textup{cl}}(M;\R^l)$ which induces isomorphisms between Sobolev spaces, namely,
\begin{equation} \label{11.red}
R^m(\lambda):H^s(M)\rightarrow H^{s-m}(M)
\end{equation} for all $s\in \R$
and $\lambda\in\R^l.$ Then $(R^m)^{-1}(\lambda )\in L^{-m}_{\textup{cl}}(M;\R^l).$\\ Order reducing families of that kind can be employed to define some useful versions of Sobolev spaces on $\R\times M,$ and $\R_+\times M,$ respectively. First, if $F=F_{t\rightarrow\tau}$ is the one-dimensional Fourier transform and $R^s(\tau)$ such a family for $l=1,$ then the completion of $\ci_0(\R\times M)$ with respect to the norm $$\big\{\int\|R^s(\tau)F_{t \rightarrow\tau}u(\tau,\cdot)\|^2_{L^2(M)}\dbar\tau\big\}^{1/2}$$ gives us the cylindrical Sobolev space $H^s(\R\times M).$ Moreover, for the Mellin transform $$Mu(z)=\int_0^\infty r^{z-1}u(r)dr$$ on $\R_+$ with the complex covariable $z$ we obtain the weighted space $\mathcal{H}^{s,\gamma}(M^\wedge)$ on the open stretched cone $M^\wedge:=\R_+\times M$ as the completion of $\ci_0(M^\wedge)$ with respect to the norm \begin{equation} \label{11.weight}
\big\{\int_{\Gamma_{(n+1)/2-\gamma}}\|R^s(\textup{Im}\,z)M_{r \rightarrow z}f(z,\cdot)\|^2_{L^2(M)}\dbar z\big\}^{1/2}
\end{equation}
where $\dbar z:= (2\pi i)^{-1}dz,$ and $\Gamma_\beta:= \{z\in\C:\textup{Re}\,z=\beta\}.$ 
Spaces of that kind have been employed in Kondratyev's work \cite{Kond1} on boundary value problems on manifolds with conical singularities, and later on by many other authors. Another, more subtle, expression is the norm 
\begin{equation} \label{11.cone}\big\{\int\|\langle r\rangle^{-s+g+n/2}\big(F^{-1}_{\rho\rightarrow r}R^s(\langle r\rangle\rho,\langle r\rangle\eta)(F_{r\rightarrow\rho}u)\big)(r,\cdot)\|^2_{L^2(M)}dr\big\}^{1/2},
\end{equation} 
$\langle r\rangle:= (1+|r|^2)^{1/2},$ on the space $\ci_0(\R\times M)$ for any fixed $\eta\in\R^q$ of sufficiently large absolute value. The order reducing family now refers to the $(1+q)$-dimensional parameter $(\tilde{\rho},\tilde{\eta}).$ The completion gives us a space that we denote by $H^{s;g}_{\textup{cone}}(\R\times M),\,s,g\in\R,$ and we set $H^s_{\textup{cone}}(\R\times M):=H^{s;0}_{\textup{cone}}(\R\times M),\,H^s_{\textup{cone}}(M^\wedge):=H^s_{\textup{cone}}(\R\times M)|_{M^\wedge}.$\\ Let us now replace $M$ by a manifold with edge $Y$. Recall that such an $M$ can be represented as a quotient space
$\M/\!\!\sim$ for the stretched manifold $\M$ which is a smooth manifold with boundary $\partial\M$ that is an $X$-bundle over $Y$ for some (here closed compact) $\ci$ manifold $X.$ The equivalence relation $\sim$ is induced by the bundle projection $\partial\M\rightarrow Y$, while $\M\setminus\partial\M=\textup{int}\,\M$ is diffeomorphic to $M\setminus Y=:\textup{int}\,M.$ Locally near $\partial\M$ we have representations of $\M$ by stretched wedge neighbourhoods
\begin{equation} \label{11.wedge}
\M\supseteq\V \cong\overline{\R}_+\times X\times \Omega,
\end{equation}
$\Omega\subseteq\R^q$ open, for $q=\textup{dim}\,Y,$ where the diffeomorphism in $\eqref{11.wedge}$ is an isomorphism in the category of manifolds with edge, here simply a diffeomorphism between the respective manifolds with boundary which induces an isomorphism $\partial\V\cong X\times\Omega$ between the trivial $X$-bundles over $\partial\V$ and $\Omega,$ respectively. From $\partial\V$ we have local splittings of variables $(r,x,y)$ with corresponding covariables $(\rho,\xi,\eta).$ In our notation a manifold $M$ with edge $Y$  is not necessarily a topological manifold with continuous charts to open sets in $\R^{\textup{dim}\,M}$ but a stratified space. We have the strata 
\begin{equation} \label{strata}
s_0(M)=\textup{int}\,M,\,\, s_1(M)=Y
\end{equation}
and a disjoint union $M=s_0(M)\bigcup s_1(M).$ The above-mentioned symbolic components $\sigma_i(A)$ of an operator $A$ in our calculus are associated with $s_i(M),\,i=0,1.$\\
The raw material for operators in the pseudo-differential calculus on $M$ are parameter-dependent families $$\tilde{p}(r,y,\tilde{\rho},\tilde{\eta},\tilde{\lambda})\in\ci(\overline{\R}_+\times \Omega, L^m_{\textup{cl}}(M;\R^{1+q+l}_{\tilde{\rho},\tilde{\eta},\tilde{\lambda}})),$$ $m\in\R$. Let $L^m_{\textup{deg}}(M;\R^l)$ denote the subspace of all $A\in L^m_{\textup{cl}}(M\setminus Y;\R^l)$ that are $\textup{mod}\,L^{-\infty}(M\setminus Y;\R^l)$ locally near $\partial\M$ of the form $$A(\lambda)=r^{-m}\textup{Op}_{r,y}(p)(\lambda)$$ for an $L^m_{\textup{cl}}(X)$-valued amplitude function
\begin{equation} \label{11.amp}
 p(r,y,\rho,\eta,\lambda):=\tilde{p}(r,y,r\rho,r\eta,r\lambda). 
\end{equation}
We set $L^{-\infty}_{\textup{deg}}(M;\R^l):=L^{-\infty}(M\setminus Y;\R^l).$ Let $\sigma_0(A)$ denote the parameter-dependent homogeneous principal symbol of $A\in L^m_{\textup{deg}}(M;\R^l),\,\sigma_0(A)\in\ci(T^*(M\setminus Y)\times\R^l\setminus 0).$ Locally near $Y$ in the variables $(r,x,y)$ there is a function $\tilde{\sigma}_0(A)(r,x,y,\tilde{\rho},\xi,\tilde{\eta},\tilde{\lambda})$ (that we also call the reduced symbol), smooth in $r$ up to $0,$ such that $$\sigma_0(A)(r,x,y,\rho,\xi,\eta, \lambda)=r^{-m}\tilde{\sigma}_0(A)(r,x,y,r\rho,\xi,r\eta,r\lambda).$$ Similarly as in the standard pseudo-differential calculus over $M\setminus Y$ every $A\in L^m_{\textup{deg}}(M;\R^l)$ can be represented as a sum $A=A_0+C$ where $A_0$ is properly supported and $C\in L^{-\infty}(M\setminus Y;\R^l).$\\Observe that for every $A_i\in L^{m_i}_{\textup{deg}}(M;\R^l),\,i=1,2,$ and $A_1$ or $A_2$ properly supported, we have $A_1A_2\in L^{m_1+m_2}_{\textup{deg}}(M;\R^l),$ and that the parameter-dependent homogeneous principal symbols are multiplicative. Also other elements of the standard calculus such as the behaviour of formal adjoints, or the invariance (here under isomorphisms in the category of manifolds with edge) can easily be checked.\\By the (parameter-dependent) edge algebra we understand a specific subalgebra of \,$\bigcup_mL^m_{\textup{deg}}(M;\R^l)$, characterised by certain conditions close to the edge $Y.$ Those contain weight and asymptotic data, and some quantisation of the local amplitude functions $\eqref{11.amp}$ that turn the resulting elements to (families of) continuous operators
\begin{equation}\label{11.cont}
A(\lambda):H^{s,\gamma}(M)\rightarrow H^{s-m,\gamma-\mu}(M)
\end{equation}
in weighted edge spaces $H
^{s,\gamma}(M)$ of smoothness $s$ and weight $\gamma,$ cf. \cite{Schu20}, or subsection 1.2 below. The weight shift $\mu$ and its relationship with $m$ will be explained below. We will interpret $\mu$ as the leading order in connection with ellipticity, while lower order terms with respect to edge symbols will be characterised by $m=\mu-j,\,j\in\N.$ Moreover, together with the pair of weights $(\gamma,\gamma-\mu)$ we will keep in mind a fixed width of a half-open weight strip $\Theta=(-(k+1),0], \,k\in\N\cup\,\{\infty\}$ on the left of the weight lines $\Gamma_{n+1/2-\gamma}$ and $\Gamma_{n+1/2-(\gamma-\mu)},$ respectively, $n=\textup{dim}\,X.$ Let ${\bf{g}}_{\gamma,\mu}:=(\gamma,\gamma-\mu,\Theta)$ denote the weight data. Our edge operators will be defined in terms of a sum
\begin{equation}\label{11.alg}
L^m(M,{\bf{g}}_{\gamma,\mu};\R^l)=L^m_{\textup{flat}}(M,{\bf{g}}_{\gamma,\mu};\R^l)+L^m_{\textup{as}}(M,{\bf{g}}_{\gamma,\mu};\R^l).
\end{equation}
The subspaces $L^m_{\textup{flat}}$ and $L^m_{\textup{as}}$ will be studied below in the subsections 1.2 and 1.3, based on holomorphic and meromorphic Mellin symbols. For the holomorphic part we need the following definition. If $E$ is a Fr\'echet space and $U\subseteq\C$ open, by $\mathcal{A}(U,E)$ we denote the space of all holomorphic functions in $U$ with values in $E,$ in the Fr\'echet topology of uniform convergence on compact subsets.
\begin{Def} \label{11.mehol}
Let $M^m_{\mathcal{O}}(X;\R^l)$ defined to be the set of all $h(z,\lambda)\in\mathcal{A}(\C,L^m_{\textup{cl}}(X;\R^l))$ such that $h(\beta+i\rho,\lambda)\in L^m_{\textup{cl}}(X;\R^{1+l}))$ for every $\beta\in\R,$ uniformly in compact $\beta$-intervals.
\end{Def}
The following Mellin quantisation result from \cite{Schu2} refers to pseudo-differential operators on $\R_+$ based on the weighted Mellin transform, namely,
\begin{equation}\label{11.mps}
\textup{op}^\delta_M(f)(\lambda)=\iint(r/r')^{-(1/2-\delta+i\rho)}f(r,r',1/2-\delta+i\rho,\lambda)u(r')dr'/r'\dbar \rho
\end{equation}
with $f(r,r',z,\lambda)\in \ci(\R_+\times\R_+,L^m_{\textup{cl}}(X;\Gamma_{1/2-\delta}\times \R^l))$ being an $L^m_{\textup{cl}}(X;\R^l)$-valued amplitude function.
\begin{Thm} \label{1.ph}
For every $\tilde{p}(r,r',y,\tilde{\rho},\tilde{\eta},\tilde{\lambda})\in\ci(\overline{\R}_+\times\overline{\R}_+\times\Omega,L^m_{\textup{cl}}(X;\R^{1+q+l}))$ there exists an $\tilde{h}(r,r',y,z,\tilde{\eta},\tilde{\lambda})\in \ci(\overline{\R}_+\times\overline{\R}_+\times\Omega,M^m_{\mathcal{O}}(X;\R^{q+l})),$ such that for every $\beta\in\R$ $$\textup{Op}_{r,y}(p)(\lambda)=\textup{Op}_y\textup{op}^\beta_M(h)(\lambda)\quad \textup{mod}\,L^{-\infty}(\R_+\times X\times\Omega;\R^l)$$ for $p(r,r',y,\rho,\eta,\lambda):=\tilde{p}(r,r',y,r\rho,r\eta,r\lambda),\,h(r,r',y,z,\eta,\lambda):=\tilde{h}(r,r',y,
z,r\eta,r\lambda).$
\end{Thm}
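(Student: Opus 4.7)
The plan is to invoke Schulze's kernel cut-off technique to produce the holomorphic Mellin symbol, then match it to the Fourier quantisation via the logarithmic change of variables $r=e^{-t}$, carrying the smooth parameters $(r,r',y,\tilde\eta,\tilde\lambda)$ along throughout. The analytic core is the construction in the single covariable $\tilde\rho$; the rest is routine bookkeeping.

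Step 1 (Holomorphic extension via kernel cut-off). Choose $\psi\in\ci_0(\R_{\tilde t})$ with $\psi\equiv 1$ near the origin. I would associate to $\tilde p$ a function $\tilde h$ of $z\in\C$ by first forming the distributional inverse Fourier transform $F^{-1}_{\tilde\rho\to\tilde t}\tilde p$, multiplying by $\psi$, and then taking the Fourier-type transform in $\tilde t\to z$ with $z$ extended to the complex plane. Compact support of $\psi$ forces the integrand to have compact $\tilde t$-support, so $\tilde h$ is entire in $z$ with smooth dependence on the remaining parameters. The verification that $\tilde h$ belongs to $\ci(\overline{\R}_+\times\overline{\R}_+\times\Omega, M^m_{\mathcal O}(X;\R^{q+l}))$ and that $\tilde h|_{\Gamma_0}-\tilde p$ is a parameter-dependent smoothing family amounts to a quantitative analysis: the smoothing nature comes from the fact that $(1-\psi)F^{-1}\tilde p$ is Schwartz, since $F^{-1}\tilde p$ is rapidly decaying away from the singular point $\tilde t=0$; the classical symbol estimates on every vertical line $\Gamma_\beta$, uniform for $\beta$ in compacta, follow from the homogeneous asymptotic expansion of $\tilde p$ combined with repeated integration by parts against $e^{-i\tilde t z}$.

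Step 2 (Identification of the two operators). Set $h(r,r',y,z,\eta,\lambda):=\tilde h(r,r',y,z,r\eta,r\lambda)$. To prove $\Op_{r,y}(p)(\lambda)=\Op_y\op_M^\beta(h)(\lambda)$ modulo $L^{-\infty}(\R_+\times X\times\Omega;\R^l)$, I would perform the substitution $r=e^{-t}$, $r'=e^{-t'}$, under which the weighted Mellin transform on $\Gamma_\beta$ becomes the ordinary Fourier transform in $t$ with $\tau=\textup{Im}\,z$ dual to $t$. Both $\Op_{r,y}(p)$ and $\Op_y\op_M^\beta(h)$ convert into Fourier-based $\Psi$DOs on $\R_t\times\Omega_y$ whose amplitudes differ precisely by the smoothing remainder of Step 1, once the edge-degenerate rescaling $(\eta,\lambda)\mapsto(r\eta,r\lambda)$ is carried through on both sides. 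Independence of $\beta\in\R$ is an immediate consequence of holomorphy of $\tilde h$ and Cauchy's theorem applied to the contour shift $\Gamma_{\beta_0}\to\Gamma_\beta$, the required horizontal decay being exactly the uniform symbol bounds built into $M^m_{\mathcal O}$.

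The main obstacle lies in Step 1. The naive bound $|e^{-i\tilde t z}|=e^{\tilde t\,\textup{Im}\,z}$ on $\textup{supp}\,\psi$ yields only exponential growth of $\tilde h$ in $\textup{Im}\,z$; extracting the required \emph{polynomial} classical-symbol growth uses the precise singularity structure of $F^{-1}\tilde p$ at $\tilde t=0$ dictated by the asymptotic order $m$, together with the fact that integration by parts against $e^{-i\tilde t z}$ produces factors of $z^{-1}$ that convert exponential into polynomial behaviour. This quantitative kernel-cut-off lemma is the content of the Mellin quantisation theorem of \cite{Schu2}; once it is in place the rest of the proof is essentially formal.
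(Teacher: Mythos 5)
The paper itself does not prove Theorem \ref{1.ph}; it refers to \cite{Schu2}, \cite{Schu20}, \cite{Krai2}, and your outline is in the spirit of the kernel cut-off proofs given there. However, as written there is a genuine gap at the junction of your two steps. In Step 1 you cut off the kernel $F^{-1}_{\tilde\rho\to\tilde t}\tilde p$ in the variable $\tilde t$ dual to $\tilde\rho$ and transform back with $z\in\C$; this yields an entire family $\tilde h$ with $\tilde h|_{\Gamma_\beta}=\tilde p$ modulo a parameter-dependent smoothing family, i.e.\ a holomorphic extension in the \emph{complexified Fourier covariable}. But in the Schwartz kernel of $\textup{Op}_r(p)$ the variable actually dual to $\tilde\rho$ is $1-r'/r$ (the phase is $(r-r')\rho=(1-r'/r)\tilde\rho$), whereas the Mellin quantisation \eqref{11.mps} pairs its covariable with $\log(r/r')$. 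These two defining functions of the diagonal agree only to first order at $r=r'$, so the claim in Step 2 that after $r=e^{-t}$ ``the amplitudes differ precisely by the smoothing remainder of Step 1'' is not correct: inserting your $\tilde h$ into $\textup{op}^\beta_M$ leaves a discrepancy with $\textup{Op}_r(p)$ whose kernel is conormal of one order lower at the diagonal --- lower order, but not smoothing. This phase/covariable mismatch is exactly why the kernel cut-off operator of the paper, $V(\varphi)$ in \eqref{cut}, is formulated multiplicatively on $\R_+$ (in $s=r/r'$) rather than additively in the covariable of $\tilde p$.

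The standard repair --- and what the cited proofs do --- is either (a) to define $\tilde h(r,r',y,z,\tilde\eta,\tilde\lambda)$ as the Mellin transform in $s=r/r'$ of the truncated operator kernel $\psi(s)k(r,r',s)$ of $\textup{Op}_r(p)$, so that holomorphy again comes from the compact $s$-support and the $\beta$-independence from Cauchy's theorem, as you say; or (b) to first convert the Fourier oscillatory integral into Mellin form by the substitution $s=r'/r$ together with a Kuranishi-type change of covariable $\tilde\rho\mapsto\tau$, which produces an asymptotic expansion of $\tilde h|_{\Gamma_\beta}$ with leading term $\tilde p$ but non-trivial lower-order corrections, and only then apply the kernel cut-off. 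With either fix the remaining points of your outline go through essentially as you describe: the quantitative symbol estimates on vertical lines, uniform on compact $\beta$-intervals; the spectator role of $(\tilde\eta,\tilde\lambda)$, which gives smooth dependence on $(r,r',y)$ up to $r=r'=0$ and the asserted form $h=\tilde h(r,r',y,z,r\eta,r\lambda)$; and the identification of the remainder as an element of $\mathcal{S}(\R^l,L^{-\infty}(\R_+\times X\times\Omega))$, where the Schwartz decay in $\lambda$ of the off-diagonal part still has to be checked by integration by parts in $\tilde\rho$.
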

A proof may be found in \cite{Schu2}, see also \cite{Schu20}, or Krainer \cite{Krai2} for an alternative strategy.

\subsection{Edge operators with holomorphic symbols}
The first summand on the right of $\eqref{11.alg}$ (also referred to as the flat part of the edge calculus) consists of Mellin operators with $M^m_{\mathcal{O}}$-valued symbols, modulo certain Green operators appearing under compositions. The latter also occur in the asymptotic part of the edge calculus, then in combination with asymptotic quantities, cf. subsection 1.3 below. Although all those operators belong to $L^m_{\textup{deg}}(M;\R^l),$ apart from $\sigma_\psi$ they will possess a so-called edge symbol coming from the edge. The latter is the (twisted) homogeneous principal part of a certain classical operator-valued symbol. Let us  recall some generalities on such symbols. \\A Hilbert space $H$ is said to be endowed with a group action $\kappa=\{\kappa_\lambda\}_{\lambda\in\R_+}$ if $\kappa_\lambda:H\rightarrow H$ is a strongly continuous group of isomorphisms with
$\kappa_\lambda\kappa_\nu=\kappa_{\lambda\nu}$ for every $\lambda,\nu\in\R_+.$\\An example is the space
\begin{equation} \label{11.kegel}
\mathcal{K}^{s,\gamma}(X^\wedge):= \{u=\omega u_0+(1-\omega ) u_\infty :u_0\in\mathcal{H}^{s,\gamma}(X^\wedge), u_\infty\in H^s_{\textup{cone}}(X^\wedge)\}
\end{equation}
for $H^s_{\textup{cone}}(X^\wedge):=H^s_{\textup{cone}}(\R\times X)|_{\R_+\times X}$ and any cut-off function $\omega$ on the half-axis (i.e., $\omega(r)\in\ci_0(\overline{\R}_+),\omega=1$ in a neighbourhood of $0).$ In this case we set $(\kappa_\lambda u)(r,\cdot):=\lambda^{(n+1)/2}u(\lambda r,\cdot)$ for $n=\textup{dim}\,X.$\\A similar terminology will be used for a Fr\'echet space $E,$ written as a projective limit of Hilbert spaces $E^j,\,j\in\N,$ with continuous embeddings $E^j\hookrightarrow E^0$ for all $j;$ then we assume that $E^0$ is endowed with a group action that restricts to a group action on $E^j$ for every $j.$ An example is the space
\begin{equation} \label{11.kegelflat}
\mathcal{K}_\Theta^{s,\gamma}(X^\wedge):= \varprojlim_{m\in\N}\mathcal{K}^{s,\gamma+\theta-(m+1)^{-1}}(X^\wedge)
\end{equation}
where $\Theta=(\theta,0]\subseteq\overline{\R}_-$ is a weight interval.
\\If $H$ and $\tilde{H}$ are Hilbert spaces with group actions $\kappa$ and $\tilde{\kappa}$, respectively, an $\mathcal{L}(H,\tilde{H})$-valued function $f(y,\eta)$ on $U\times(\R^q\setminus\{0\}), U\subseteq\R^d$ open, is called (twisted) homogeneous in $\eta$ of order $\nu$ if$$f(y,\lambda\eta)=\lambda^\nu\tilde{\kappa}^{-1}_\lambda f(y,\eta)\kappa_\lambda$$ for all $\lambda\in\R_+.$ By $S^m(U\times\R^q;H,\tilde{H}),m\in\R,$ we denote the set of all smooth $\mathcal{L}(H,\tilde{H})$-valued functions $a(y,\eta)$ on $\Omega\times\R^q$ such that $$\textup{sup}_{(y,\eta)\in K\times\R^q}\langle\eta\rangle^{-m+|\beta|}||\tilde{\kappa}^{-1}_{\langle\eta\rangle}\{D^\alpha_yD^\beta_\eta a(y,\eta)\}\kappa_{\langle\eta\rangle}||_{\mathcal{L}(H,\tilde{H})}<\infty$$ for every $K\Subset U, \alpha\in\N^d,\beta\in\N^q.$ An $a\in S^m$ is said to be classical,
written $a\in S^m_{\textup{cl}},$ if there are homogeneous components $a_{(m-j)}(y,\eta)$ of homogeneity $m-j,\,j\in\N,$ such that $a=\chi\sum_{j=0}^Na_{(m-j)}\,\textup{mod}\,S^{m-(N+1)}$ for every $N\in\N$ and an excision function $\chi(\eta).$ A similar notation will be used for Fr\'echet spaces with group action. Clearly for $H=\tilde{H}=\C$ and $\kappa_\lambda=\tilde{\kappa}_\lambda=\textup{id}$ for all $\lambda$ we just recover the spaces $S^m_{(\textup{cl})}(\Omega\times\R^q)$ of scalar symbols. (We write subscript $``(\textup{cl})"$ if a statement is valid both in the classical and the general case). If necessary the dependence of the symbol spaces of the choice of $\kappa,\tilde{\kappa}$ will be indicated by corresponding subscripts, i.e., $S^m(\cdots)_{\kappa,\tilde{\kappa}}.$ Let us now turn to weighted edge spaces, first in abstract form, namely, $\mathcal{W}^s(\R^q,H),$ for a Hilbert space $H$ with group action $\kappa,$ defined to be the completion of $\mathcal{S}(\R^q,H)$ with respect to the norm
$||\langle\eta\rangle^s\kappa^{-1}_{\langle\eta\rangle}\hat{u}(\eta)||_{L^2(\R^q,H)},\,\hat{u}(\eta)=F_{y\rightarrow\eta}u(\eta).$ In a similar manner we define corresponding edge spaces for a Fr\'echet space $E$ with group action in place of $H.$ The choice of the group action will be clear from the context; if necessary we indicate $\kappa$ by a subscript. The case $\kappa_\lambda=\textup{id}$ for all $\lambda$ is admitted as well; then $\mathcal{W}^s(\R^q,H)_{\textup{id}}=H^s(\R^q,H)$ which is the standard Sobolev space of $H$-valued distributions of smoothness $s$. In the case of an open stretched wedge $M=X^\wedge\times\R^q$ for a closed and smooth manifold $X$ we form $$\mathcal{W}^s(\R^q,\mathcal{K}^{s,\gamma}(X^\wedge))$$ for $s,\gamma\in\R.$ In general, for a manifold $M$ with edge $Y$ the space $H^{s,\gamma}(M)$ is the subspace of those $u\in H^s_{\textup{loc}}(M\setminus Y)$ such that in the variables $(r,x,y)\in X^\wedge\times\Omega$ from $\eqref{11.wedge}$ we have $\omega\varphi u\in\mathcal{W}^s(\R^q,\mathcal{K}^{s,\gamma}(X^\wedge))$ for any cut-off function $\omega$ on the half-axis and $\varphi\in\ci_0(\Omega).$ This and the future global constructions refer to a fixed choice of local representations $\eqref{11.wedge}$ where we assume, for simplicity, that the transition maps are independent of $r$ for small $r.$ We might impose much weaker conditions; in any case, using a straightforward invariance property we obtain global spaces $H^{s,\gamma}(M)$ on a compact manifold $M$ with edge. Let us point out that the choice of local models $\eqref{11.wedge}$ can be interpreted as a kind of ``regular singular structure" formulated in terms of a specified system of singular charts where the local models are wedges and the analytic objects invariantly defined with respect to the cocycle of transition maps. In the case of (not necessarily compact) $M$ we write $H^{s,\gamma}_{[{\textup{loc}})}(M)$ where $``[\cdots)"$ indicates the specified behaviour close to $\partial\,\M.$ Moreover, $H^{s,\gamma}_{[{\textup{comp}})}(M)$ denotes the subspace of  elements such that the closure of the support  is compact in $\M.$ We also have flat wedge spaces $\mathcal{W}^s(\R^q,\mathcal{K}_\Theta^{s,\gamma}(X^\wedge)),$ for $\Theta=(\theta,0]$ as in $\eqref{11.kegelflat},$ and corresponding global spaces $H_\Theta^{s,\gamma}(M)$ when $M$ is compact, otherwise in the corresponding $``[\textup{loc})"$- or $``[\textup{comp})"$-versions. We will call an operator $C\in L^{-\infty}(\textup{int}\,M)$ smoothing and flat (with respect to the weight data ${\bf{g}}_{\gamma,\mu}:=(\gamma,\gamma-\mu,\Theta)$) if $C$ and its formal adjoint with respect to the $H^{0,0}(M)$-scalar product induce continuous operators $$C:H^{s,\gamma}(M)\rightarrow\ H_\Theta^{\infty,\gamma-\mu}(M),\,C^*:H^{s,-\gamma+\mu}(M)\rightarrow H_\Theta^{\infty,-\gamma}(M)$$ for all $s\in\R.$ In the present subsection we are interested in the case $\Theta=(-\infty,0].$ Then the definition is independent of the weights. Let $L_{\mathcal{O}}^{-\infty}(M)$ denote the corresponding space of smoothing operators (it is Fr\'echet in a natural way), and set $L_{\mathcal{O}}^{-\infty}(M;\R^l):= \mathcal{S}(\R^l,L_{\mathcal{O}}^{-\infty}(M)).$\\In order to formulate the first summand on the right of $\eqref{11.alg}$ we first introduce flat Green operators of the edge calculus. Let us set $\mathcal{K}^{s,\gamma;g}(X^\wedge):=\langle r\rangle^{-g}\mathcal{K}^{s,\gamma}(X^\wedge)$ for $g,s,\gamma\in\R.$ This is also a Hilbert space with group action $\kappa,$ defined by the same expression as for $\mathcal{K}^{s,\gamma}(X^\wedge),$ and we have an antilinear duality between $\mathcal{K}^{s,\gamma;g}$ and $\mathcal{K}^{-s,-\gamma;-g}$ via the $\mathcal{K}^{0,0}$-scalar product. The space of flat Green symbols $\mathcal{R}_{G,\mathcal{O}}^m(U\times\R^q),$ for $m\in\R,\,U\subseteq\R^d$ open, is defined to be the set of all
\begin{equation} \label{12.Green}
g(y,\eta), g^*(y,\eta)\in\bigcap S_{\textup{cl}}^m(U\times\R^q;\mathcal{K}^{s,\gamma;g}(X^\wedge)),\mathcal{K}^{s',\gamma';g'}(X^\wedge)))
\end{equation}
where the intersection is taken over all $s,s',\gamma,\gamma',g,g'\in\R,$ and $\eqref{12.Green}$ means the respective condition both for $g$ and its $(y,\eta)$-wise formal adjoint $g^*$ with respect to the $\mathcal{K}^{0,0}$-scalar product. The dimensions $d$ and $q$ are completely independent. In particular, we have the space $\mathcal{R}_{G,\mathcal{O}}^m(\Omega\times\R_{\eta,\lambda}^{q+l})$ of parameter-dependent Green symbols $g(y,\eta,\lambda)$ for $\Omega\subseteq\R^q$ open, and $\eta\in\R^q,\lambda\in\R^l.$ \\In the sequel a cut-off function $\sigma$ on $M$ near $Y$ is an element of $\ci(\textup{int}\,M)$ belonging to $\ci(\M)$ (under the identification $\textup{int}\,M \cong\textup{int}\,\M$) and being $1$ in a collar neighbourhood of $\partial\M$ and vanishing outside another collar neighbourhood.\\Now let $M$ be a (first compact) manifold with edge $Y.$ Then
\begin{equation} \label{11.flat}
 L^m_{\textup{flat}}(M,{\bf{g}}_{\gamma,\mu};\R^l)
\end{equation}
 for ${\bf{g}}_{\gamma,\mu}:=(\gamma,\gamma-\mu,\Theta)$ and $m=\mu-j$ is defined to be the subspace of all operator families $A(\lambda)\in L^m_{\textup{cl}}(M\setminus Y,;\R^l)$ which are of the form $$A_{\textup{edge}}+A_{\textup{int}}+C$$ for $A_{\textup{int}}\in L^m_{\textup{cl}}(M\setminus Y;\R^l)$ vanishing near $Y$ (i.e., $\sigma A_{\textup{int}}\sigma'=0$ for suitable cut-off functions $\sigma, \sigma'$ near $Y),$ moreover, $C\in L_{\mathcal{O}}^{-\infty}(M),$ and $A_{\textup{edge}}$ is (up to pull backs to a neighbourhood of the edge on $M$) a finite sum of operators \begin{equation} \label{1.quant}
 \Op_y(a)(\lambda ) \quad \mbox{for} \quad a(y,\eta ,\lambda ):=\epsilon\{r^{-m}\omega \op^{\gamma-n/2}_M(h)(y,\eta,\lambda)\omega'+g(y,\eta,\lambda)\}\epsilon '
\end{equation}
for Mellin symbols $h(r,y,z,\eta,\lambda):=\tilde{h}(r,y,
z,r\eta,r\lambda),\,\tilde{h}(r,y,z,\tilde{\eta},\tilde{\lambda})\in \ci(\overline{\R}_+\times \Omega,M^m_{\mathcal{O}}(X;\R^{q+l}_{\tilde{\eta},\tilde{\lambda}})),$ cut-off functions $\omega,\omega', \epsilon , \epsilon '$ on the half axis, and $g(y,\eta,\lambda)\in\mathcal{R}_{G,\mathcal{O}}^m(\Omega\times\R_{\eta,\lambda}^{q+l})$. The open sets $\Omega\subseteq\R^q$ correspond to charts on $Y,$ and the representations refer to the local variables from $\eqref{11.wedge}.$ The definition of flat edge operators implies $L^m_{\textup{deg}}(M;\R^l)\subseteq L^m_{\textup{flat}}(M,{\bf{g}}_{\gamma,\mu};\R^l).$ This gives us $\sigma _0(A)$ for every $A$ in this space. 
Recall that there is another equivalent definition of edge amplitude functions rather than $\eqref{1.quant}$, namely,
\begin{equation} \label{1.edamp}
a(y,\eta,\lambda ):= \epsilon\{r^{-m}\omega_{\eta,\lambda }\op^{\gamma -n//2}_M(h)(y,\eta,\lambda )\omega'_{\eta,\lambda }+\chi_{\eta,\lambda } \Op_r(p)(y,\eta,\lambda )\chi'_{\eta,\lambda }\}\epsilon'+g(y,\eta,\lambda );
\end{equation}
where $p$ and $h$ are associated via Theorem \ref{1.ph}.
\\Let $g_{(m)}$ denote the homogeneous principal component of $g,$ and set $h_0(r,y,z,\eta,\lambda)=\tilde{h}(0,y,z,r\eta,r\lambda).$ The family of operators  $$\sigma_1(A)(y,\eta,\lambda):= r^{-m}\op_M^{\gamma-n/2}(h_0)(y,\eta,\lambda)+g_{(m)}(y,\eta,\lambda)$$ for $(y,\eta,\lambda)\in T^*Y\times\R^l\setminus0$ (where $0$ means $(\eta,\lambda)=0)$ is an analogue of $\eqref{sy1},$ called the homogeneous principal edge symbol of $A$ of order $m.$
The ellipticity will refer to both symbolic components and the case $m=\mu.$ An $A\in L^\mu_{\textup{flat}}(M,{\bf{g}}_{\gamma,\mu};\R^l)$ for ${\bf{g}}_{\gamma,\mu}:=(\gamma,\gamma-\mu,\Theta)$ is called (parameter-dependent) $\sigma_0$-elliptic if $\sigma_0(A)$ never vanishes on $T^*(M\setminus Y)\times \R^l\setminus0,$ and if close to $Y$ the reduced symbol $\tilde{\sigma}_0(A)$ does not vanish for $(\tilde{\rho},\xi,\tilde{\eta},\tilde{\lambda})\neq0$ up to $r=0.$ Moreover, $A$ is called (parameter-dependent) elliptic if in addition 
\begin{equation} \label{ell}
\sigma_1(A)(y,\eta,\lambda):\mathcal{K}^{s,\gamma}(X^\wedge)\rightarrow\mathcal{K}^{s-\mu,\gamma-\mu}(X^\wedge)
\end{equation}
is a family of isomorphisms for all $(y,\eta,\lambda)\in T^*Y\times\R^l\setminus0.$ Observe that the ellipticity refers to a fixed weight $\gamma.$ The operators in $L^\mu_{\textup{flat}}(M,{\bf{g}}_{\gamma,\mu};\R^l)$
extend (or restrict) to corresponding operators in $L^\mu_{\textup{flat}}(M,{\bf{g}}_{\gamma',\mu};\R^l)$ for any $\gamma'\leqslant\gamma$ (or $\gamma'\geqslant\gamma),$ and it may happen that an elliptic $A$ with respect to $\gamma$ is not elliptic with respect to $\gamma'.$ For our purposes we need ellipticity in a prescribed weight interval, together with a corresponding order-reducing property. 
\begin{Thm}
For every $\mu\in\R$ and every $c\leqslant c'$ there exists an element an $A\in L^\mu_{\textup{flat}}(M,{\bf{g}}_{\gamma,\mu};\R^l)$
that is elliptic with respect to all weights $c\leqslant\gamma\leqslant c'$ and induces isomorphisms $\eqref{11.cont}$ for all $s\in\R,\lambda\in\R^l,c\leqslant\gamma\leqslant c'.$
\end{Thm}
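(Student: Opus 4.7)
\medskip

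\noindent\textbf{Proof plan.} The strategy is to produce $A$ explicitly from a parameter-dependent order reduction on the cross section $X$, push it through Mellin quantisation, and then cure the remaining non-invertibilities by a smoothing Mellin perturbation. First, fix a closed collar model $\eqref{11.wedge}$ near $\partial \M$ with variables $(r,x,y)$. On $X$ I start from a parameter-dependent elliptic family
$\tilde r^\mu(\tilde\rho,\tilde\eta,\tilde\lambda)\in L^\mu_{\textup{cl}}(X;\R^{1+q+l})$ that, by the $(\tilde\rho,\tilde\eta,\tilde\lambda)$-analogue of $\eqref{11.red}$, induces isomorphisms $H^s(X)\to H^{s-\mu}(X)$ for all $s$ and all values of the parameter. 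Applying Theorem \ref{1.ph} to $\tilde p:=\tilde r^\mu$ produces an $\tilde h(z,\tilde\eta,\tilde\lambda)\in\ci(\overline{\R}_+\times\Omega,M^\mu_{\mathcal{O}}(X;\R^{q+l}))$ whose restriction to every weight line $\Gamma_{1/2-\delta}$ is again parameter-dependent elliptic and agrees with $\tilde p$ modulo $L^{-\infty}$. Set $h(r,y,z,\eta,\lambda):=\tilde h(z,r\eta,r\lambda)$ and $p(r,y,\rho,\eta,\lambda):=\tilde r^\mu(r\rho,r\eta,r\lambda)$.

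\medskip

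\noindent Next I form the edge amplitude in the form $\eqref{1.edamp}$, namely
$a(y,\eta,\lambda):=\epsilon\{r^{-\mu}\omega_{\eta,\lambda}\op^{\gamma_0-n/2}_M(h)(\eta,\lambda)\omega'_{\eta,\lambda}
+\chi_{\eta,\lambda}\Op_r(p)(\eta,\lambda)\chi'_{\eta,\lambda}\}\epsilon'$,
for some reference weight $\gamma_0\in[c,c']$, and define $A_0:=\Op_y(a)+A_{\textup{int}}$, where $A_{\textup{int}}\in L^\mu_{\textup{cl}}(M\setminus Y;\R^l)$ is a parameter-dependent elliptic operator supported away from $Y$. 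By construction $A_0\in L^\mu_{\textup{flat}}(M,\bs{g}_{\gamma,\mu};\R^l)$ for every admissible $\gamma$, and the reduced symbol $\tilde\sigma_0(A_0)$ is (up to lower-order contributions) the homogeneous principal symbol of $\tilde r^\mu$, hence never vanishes. For the edge symbol at weight $\gamma$ one gets
$\sigma_1(A_0)(y,\eta,\lambda)=r^{-\mu}\op_M^{\gamma-n/2}(h_0)(\eta,\lambda)$ with $h_0(r,z,\eta,\lambda)=\tilde h(z,r\eta,r\lambda)$, and the bijectivity between $\mathcal{K}^{s,\gamma}(X^\wedge)$ and $\mathcal{K}^{s-\mu,\gamma-\mu}(X^\wedge)$ reduces, via the kernel cut-off/group action $\kappa_{|(\eta,\lambda)|}$, to invertibility of the holomorphic family $\tilde h(z,\tilde\eta,\tilde\lambda):H^s(X)\to H^{s-\mu}(X)$ on the weight line $\Gamma_{(n+1)/2-\gamma}$ for all $\gamma\in[c,c']$ and all $(\tilde\eta,\tilde\lambda)$ in the unit sphere.

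\medskip

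\noindent The crucial step is to arrange \emph{simultaneous} invertibility of $\tilde h(z,\tilde\eta,\tilde\lambda)$ on the whole strip $\Sigma:=\{(n+1)/2-c'\le\textup{Re}\,z\le (n+1)/2-c\}$. By parameter-dependent ellipticity, for each fixed $z_0\in\Sigma$ the family is invertible for $|(\tilde\eta,\tilde\lambda)|$ large, so the set of bad points $(z,\tilde\eta,\tilde\lambda)$ in $\Sigma\times S^{q+l-1}$ lies in a compact subset, and by meromorphic Fredholm theory it is at worst a discrete subset cut out by the vanishing of a holomorphic determinant. I remove it by adding a smoothing Mellin symbol $f\in M^{-\infty}_{\mathcal{O}}(X;\R^{q+l})$ (built from a suitable finite-rank holomorphic family whose range and cokernel match those of $\tilde h$ at the exceptional points, compensated by a smoothing parameter-dependent correction to keep $f\in M^{-\infty}_{\mathcal{O}}$); then $\tilde h+f$ is invertible on all of $\Sigma$ for every $(\tilde\eta,\tilde\lambda)\ne 0$. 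Replacing $h$ by $h+f$ in the construction yields an $A\in L^\mu_{\textup{flat}}(M,\bs{g}_{\gamma,\mu};\R^l)$ that is parameter-dependent elliptic with respect to every $\gamma\in[c,c']$ simultaneously.

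\medskip

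\noindent Finally, I pass from ellipticity to isomorphisms $\eqref{11.cont}$. The parameter-dependent edge calculus provides a parametrix $P(\lambda)\in L^{-\mu}_{\textup{flat}}(M,\bs{g}_{\gamma-\mu,-\mu};\R^l)$ with $I-P(\lambda)A(\lambda),\,I-A(\lambda)P(\lambda)\in L^{-\infty}(M;\R^l)$ in the parameter-dependent smoothing-and-flat sense, hence Schwartz in $\lambda$. For $|\lambda|$ large the remainders have operator norm $<1$ and Neumann inversion gives honest inverses in the edge spaces for every $\gamma\in[c,c']$; for the remaining compact range of $\lambda$ one further corrects $A$ by a $\lambda$-compactly supported smoothing-and-flat summand (again of the form $\Op_y$ of a smoothing Mellin symbol, so staying inside $L^\mu_{\textup{flat}}$) to kill the finite-dimensional kernels and cokernels uniformly in $\gamma\in[c,c']$. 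The main obstacle is the middle step: producing a \emph{single} holomorphic Mellin symbol whose values are bijective on an entire strip of weight lines and uniformly in the covariable parameters, which requires a careful choice of the smoothing Mellin perturbation $f$ so as to simultaneously resolve the conormal spectrum across $\Sigma$ without destroying holomorphy; once this is in place the rest is routine edge-calculus bookkeeping.
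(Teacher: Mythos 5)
The paper offers no detailed argument for this theorem (it is dismissed with ``follows from the general techniques of the parameter-dependent edge calculus''), so your sketch can only be judged on its own merits; its overall route (order reduction on $X$, Mellin quantisation via Theorem \ref{1.ph}, repair of the conormal symbol, parametrix plus large-parameter inversion) is indeed the standard one that the authors have in mind. However, there is a genuine gap at the decisive step. You claim that bijectivity of $\sigma_1(A_0)(y,\eta,\lambda)=r^{-\mu}\op_M^{\gamma-n/2}(h_0)(\eta,\lambda)$ on $\mathcal{K}^{s,\gamma}(X^\wedge)\to\mathcal{K}^{s-\mu,\gamma-\mu}(X^\wedge)$ ``reduces, via the kernel cut-off/group action, to invertibility of $\tilde h(z,\tilde\eta,\tilde\lambda)$ on the weight line $\Gamma_{(n+1)/2-\gamma}$ for all $(\tilde\eta,\tilde\lambda)$ in the unit sphere.'' This is false on two counts. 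First, the subordinate conormal symbol of the edge symbol is the $(\eta,\lambda)$-independent family $f(y,z)=\tilde h(0,y,z,0,0)$, i.e.\ the value at $\tilde\eta=\tilde\lambda=0$, not the family on the sphere; your ``bad set'' analysis is therefore aimed at the wrong object. Second, and more importantly, invertibility of the conormal symbol on the weight line (together with $\sigma_0$- and exit-ellipticity) yields only the \emph{Fredholm} property of the edge symbol as an operator on the infinite cone $X^\wedge$; kernels and cokernels may survive, and removing them requires additional Green-type (or smoothing Mellin plus Green) corrections, exactly the content of Theorem \ref{bje}, subject in principle to the index obstruction \eqref{11.obstr}. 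Since the theorem asks for $A$ in the \emph{flat} class, you must also argue that these corrections can be taken with flat Green symbols in $\mathcal{R}^{\mu}_{G,\mathcal{O}}$ and, crucially, that a single choice works simultaneously for every weight $\gamma\in[c,c']$; none of this is addressed by your determinant argument for $\tilde h$.

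The final step has a similar unproved assertion. The parametrix-plus-Neumann argument for large $|\lambda|$ presupposes full parameter-dependent ellipticity (including bijective edge symbols for all $\gamma\in[c,c']$), i.e.\ it cannot be run before the gap above is closed; and for the remaining compact $\lambda$-range you simply assert that a $\lambda$-dependent smoothing flat summand kills kernels and cokernels ``uniformly in $\gamma\in[c,c']$.'' That needs an argument: one must either exploit that kernels and cokernels of flat elliptic elements lie in flat weighted spaces (hence are independent of the weight within the interval), or avoid the issue altogether by the standard device of adjoining an extra parameter $\lambda_1$, constructing the family on $\R^{l+1}$, and freezing $\lambda_1$ at a large value (which stays inside the edge-degenerate class because the parameter enters as $r\lambda_1$). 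As it stands, your proposal correctly identifies the skeleton of the construction but leaves precisely the two points that make the theorem nontrivial --- bijectivity (not just Fredholmness) of the edge symbol uniformly over a weight interval, and invertibility of the global operator for all, not only large, parameter values --- without proof.
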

The proof follows from the general techniques of the parameter-dependent edge calculus.

\subsection{The asymptotic part of the edge calculus}
\label{cylsecleib}
The second summand on the right of $\eqref{11.alg}$ reflects asymptotic phenomena in the solvability of elliptic equations on a manifold with edge. Operators in $L^m_{\textup{as}}(M,{\bf{g}}_{\gamma,\mu};\R^l)$ appear in parametrices of elliptic elements of $L^m_{\textup{flat}}(M,{\bf{g}}_{\gamma,\mu};\R^l)$ (in the case $m=\mu$). Then $L^\mu(M,{\bf{g}}_{\gamma,\mu};\R^l)$ itself turns out to be closed under the construction of parametrices of elliptic elements, provided that we employ a sufficiently flexible notion of asymptotics such as continuous or variable discrete asymptotics. Here we refer to continuous asymptotics. In principle we also might take the more refined notion of variable discrete asymptotics, cf. \cite{Schu55}, but we are mainly interested in the role of parameter-dependent edge operators for the corner calculus; therefore, we try to minimize the effort for the technical details and single out a structure which controls asymptotic data in the weight strips $\{z\in\C:(n+1)/2-\gamma -1< \textup{Re}\,z< (n+1)/2-\gamma \}$ and $\{z\in\C:(n+1)/2-(\gamma +\mu)-1<\textup{Re}\,z< (n+1)/2-(\gamma +\mu)\},$ respectively, for $n=\textup{dim}\,X.$ In other words, throughout this exposition for convenience in  ${\bf{g}}_{\gamma,\mu}:=(\gamma,\gamma-\mu,\Theta)$ we set $\Theta=(-1,0].$\\
By a continuous Mellin asymptotic type (for principal Mellin symbols) we understand a subset $R\subset\C$ such that $R\cap\Gamma_\beta=\emptyset$ for some $\beta\in\R$ and $R\cap \{z\in\C:c\leq\textup{Re}\,z\leq c'\}$ compact for every $c\leq c'.$ Let $M^{-\infty}_R(X)$ be the subspace of all $f(z)\in \mathcal{A}(\C\setminus R,L^{-\infty}(X))$ such that for every $R$-excision function $\chi(z)$ we have $\chi f|_{\Gamma_\beta}\in L^{-\infty}(X;\Gamma_\beta)$ for every $\beta\in\R,$ uniformly in compact $\beta$-intervals. In the case $R=\emptyset$ we simply write $M^{-\infty}_{\mathcal{O}}(X).$ \\Moreover, a continuous asymptotic type (for spaces on $X^\wedge $ with weight) is a subset $P\subset\{z\in\C:\textup{Re}\,z< \delta\}$ for some real $\delta$ such that $P\cap \{z\in\C:c\leq\textup{Re}\,z\leq c'\}$ is compact for every $c\leq c'.$ In the case of spaces $\mathcal{K}^{s,\gamma}(X^\wedge), n=\textup{dim}\,X,$ we set $\delta=(n+1)/2-\gamma.$ Now the subspace $\mathcal{K}_P^{s,\gamma}(X^\wedge)$ with asymptotics of type $P$ is defined to be the non-direct sum
\begin{equation} \label{1.as}
\mathcal{K}_P^{s,\gamma}(X^\wedge):=\mathcal{K}_\Theta^{s,\gamma}(X^\wedge)+\mathcal{E}_P
\end{equation} for
$\mathcal{E}_P:=\{\omega(r)M_{w\rightarrow r}^{-1}\langle\zeta_z,\Phi(z,w)\rangle: \zeta\in\mathcal{A}'(P_\varepsilon,C^\infty(X))\},$
where $P_\varepsilon:=\{z\in P:\textup{Re}\,z\geq (n+1)/2-\gamma-1-\varepsilon\},$ and $\mathcal{A}'(K,C^\infty(X))$ is the set of all $C^\infty(X)$-valued analytic functionals carried by the compact set $K.$ Moreover, $\Phi(z,w):=M_{r\rightarrow z}(r^{-w}\omega(r))$ for some cut-off function $\omega.$  The space $\eqref{1.as}$ is Fr\'echet in a natural way and independent of the choice of $\varepsilon.$  More generally, we set $\mathcal{K}_P^{s,\gamma;g}(X^\wedge):=\langle r\rangle^{-g}\mathcal{K}_P^{s,\gamma}(X^\wedge)$ for any real number $g.$ For any weight $\gamma\in\R,$ and a Mellin symbol  $f(y,z)\in \ci(\Omega,M_R^{-\infty}(X))$ such that 
$R \cap \Gamma_{(n+1)/2- \gamma}=\emptyset,$ and cut-off functions $\omega, \omega',$ 
for $\omega_\eta(r)=\omega(r[\eta]),$ etc., we have a family of smoothing Mellin operators $m(y,\eta):=r^{-\mu}\omega_\eta\op_M^{\gamma-n/2}(f)(y)\omega'_\eta;$ those induce continuous operators 
\begin{equation}
m(y,\eta):\mathcal{K}^{s,\gamma}(X^\wedge)\rightarrow\mathcal{K}^{\infty,\gamma-\mu}(X^\wedge), \,\mathcal{K}_P^{s,\gamma}(X^\wedge)\rightarrow\mathcal{K}_Q^{\infty,\gamma-\mu}(X^\wedge)
\end{equation}
for every $s\in\R$ and every asymptotic type $P$ for some resulting asymptotic type $Q.$ Recall that 
\begin{equation}
m(y,\eta)\in S_{\textup{cl}}^\mu(\Omega\times\R^q;\mathcal{K}_{(P)}^{s,\gamma;g}(X^\wedge),\mathcal{K}_{(Q)}^{s-\mu,\gamma-\mu;g'}(X^\wedge))
\end{equation}
for all $s,g,g'\in\R;$ parentheses at the asymptotic types mean spaces with or without asymptotics on both sides. Note that smoothing Mellin operators play a role in many contexts of analysis on manifolds with conical singularities and edges, see, in mparticular, \cite{Remp1} in the case of boundary value problems, using previous work of Eskin \cite{Eski2}.\\
Another ingredient of the asymptotic part of the edge calculus are Green symbols with asymptotics. The space of Green symbols $\mathcal{R}_G^m(\Omega\times\R^q,{\bf{g}}_{\gamma,\mu}),$ for $m\in\R,\,\Omega\subseteq\R^q$ open, and asymptotic types $P,Q,$ is defined to be the set of all operator functions $g(y,\eta)$ such that 
\begin{equation} \label{12.Greenas}
g(y,\eta)\in\bigcap S_{\textup{cl}}^m(\Omega\times\R^q;\mathcal{K}^{s,\gamma;g}(X^\wedge)),\mathcal{K}_P^{s',\gamma-\mu;g'}(X^\wedge))),
\end{equation}
and
\begin{equation} \label{12.Greenstar}
 g^*(y,\eta)\in\bigcap S_{\textup{cl}}^m(\Omega\times\R^q;\mathcal{K}^{s,-\gamma+\mu;g}(X^\wedge)),\mathcal{K}_Q^{s',-\gamma;g'}(X^\wedge)))
\end{equation}
where the intersection is taken over all $s,s',g,g'\in\R,$ and $\eqref{12.Green}$ means the respective condition both for $g$ and its $(y,\eta)$-wise formal adjoint $g^*$ with respect to the $\mathcal{K}^{0,0}$-scalar product. In the following  we employ the symbols $m$ and $g$ in the version with covariables $\eta,\lambda$ rather than $\eta$ (the modification is straightforward). The spaces $\mathcal{K}_{P}^{s,\gamma;g}(X^\wedge)$  admit the group action $\kappa =\{\kappa _\lambda \}_{\lambda \in \R_+};$ thus we can form wedge spaces with asymptotics $\mathcal{W}^s(\R^q,\mathcal{K}_{P}^{s,\gamma;g}(X^\wedge)).$ Those give rise to corresponding global spaces $H_P^{s,\gamma}(M) $ on a compact manifold $M$ with edge (otherwise, in the non-compact case, corresponding comp- or loc- variants).
Let  $L^{-\infty}_{\textup{as}}(M,{\bf{g}}_{\gamma,\mu})$ denote the set of all $C:H^{s,\gamma}(M)\rightarrow H^{s-\mu,\gamma-\mu}(M)$ which induce continuous operators
$$C:H^{s,\gamma}(M)\rightarrow H_P^{\infty,\gamma-\mu}(M),\,C^*:H^{s,-\gamma+\mu }(M)\rightarrow H_Q^{\infty,\gamma-\mu}(M)$$
where $C^*$ is the formal adjoint with respect to the $\mathcal{W}^{0,0}(M)$-scalar product; this is required for all $s\in\R$ and for certain asymptotic types $P$ and $Q$ depending on $C.$ Moreover, we set $L^{-\infty}_{\textup{as}}(M,{\bf{g}}_{\gamma,\mu};\R^l):= \mathcal{S}(\R^l,L^{-\infty}_{\textup{as}}(M,{\bf{g}}_{\gamma,\mu})).$
The space 
\begin{equation} \label{11.algas}
L^{\mu -j}_{\textup{as}}(M,{\bf{g}}_{\gamma,\mu};\R^l),\,j\in\N,
\end{equation} 
is defined to be the set of all $$A_{\textup{edge}}(\lambda )+C(\lambda )$$ where (up to pull backs to a neighbourhood of the edge on $M$) the operator$A_{\textup{edge}}(\lambda )$ is a finite sum of operators 
$\sigma \Op_y(a)(\lambda)\sigma '$ for cut-off fuctions $\sigma, \sigma ',$ and symbols  $a(y,\eta,\lambda):=m(y,\eta,\lambda)+g(y,\eta,\lambda),$ with $m$ and $g$ being of the above-mentioned form where $m$ vanishes for $j>0,$ and smoothing operators $C(\lambda)\in L^{-\infty}_{\textup{as}}(M,{\bf{g}}_{\gamma,\mu};\R^l).$ 
For an operator $A\in L^{\mu-j} _{\textup{as}}(M,{\bf{g}}_{\gamma,\mu};\R^l)$ we set
$\sigma_0(A):=0,$ $$\sigma _1(A)(y,\eta,\lambda  ):=r^{-\mu}\omega (r|\eta,\lambda  |)\op_M^{\gamma -n/2}(f)(y) \omega' (r|\eta,\lambda  |)+g_{(\mu )}(y,\eta,\lambda  )$$in the case $j=0$ and
$$\sigma _1(A)(y,\eta,\lambda  ):=g_{(\mu-j )}(y,\eta,\lambda  )$$ for $j>0$ where $g_{(\mu-j )}(y,\eta,\lambda  )$ is the homogeneous principal part of $g$ as a classical symbol of order $\mu-j $.\\ The spaces $\eqref{11.algas}$ together with $\eqref{11.flat}$ furnish the calculus of edge operators $\eqref{11.alg}.$ We do not recall here all details. 
In any case we have compositions in the sense that
\begin{equation} \label{11.compo}
A\in L^m(M,{\bf{g}}_{\gamma,\mu};\R^l),\,\tilde{A}\in L^{\tilde{m}}(M,{\bf{g}}_{\tilde{\gamma},\tilde{\mu}};\R^l)\Rightarrow A\tilde{A}\in L^{m+\tilde{m}}(M,{\bf{g}}_{\tilde{\gamma},\mu+\tilde{\mu}};\R^l), \sigma (A\tilde{A})=\sigma (A)\sigma (\tilde{A})
\end{equation}
for $\gamma =\tilde{\gamma}-\tilde{\mu};$ the composition of symbols is componentwise.\\ 
Let us finally note that the assumption on the length of the weight interval $\Theta $ allows us to ignore lower order smoothing Mellin symbols; the generalities of the edge calculus remain untouched.
\subsection{Ellipticity of edge operators}
A basic tool for the higher corner spaces are parameter-dependent elliptic operators on a compact base with edge. As noted in the very beginning, ellipticity of an operator is a condition on its principal symbols, in the present case, $\sigma=(\sigma_0,\sigma_1),$ here in parameter-dependent form. An element $A$ in $L^\mu (M,{\bf{g}}_{\gamma,\mu};\R^l)$ is called elliptic if first $A$ is $\sigma _0$-elliptic, i.e., $\sigma _0(A)$ never vanishes as a function on $T^*s_0(M)\times \R^l\setminus 0,$ and the reduced symbol $\tilde{\sigma }_0(A)$ does not vanish up to the boundary of the stretched manifold, and if in addition the principal edge symbol
\begin{equation} \label{11.prc}
\sigma _1(A)(y,\eta,\lambda  ): \mathcal{K}^{s,\gamma}(X^\wedge)\rightarrow \mathcal{K}^{s-\mu ,\gamma-\mu }(X^\wedge)
\end{equation}
is bijective for all $(y,\eta ,\lambda) \in T^*s_1(M)\times \R^l\setminus 0$ for some $s\in \R$ (the choice of $s$ in this condition is unessential). We may interprete $\sigma _1(A)(y,\eta,\lambda  )$ as a family of operators in the cone algebra on the (open stretched) cone $(X^\wedge),$ parametrised by the variables $y,\eta,\lambda .$Thus there is a subordinate ($(\eta,\lambda  )$)-independent) principal conormal symbol $\sigma _1(\sigma _1(A)(y,\eta,\lambda  ))=: f(y,z)$ with the complex covariable $z.$ The following result will be formulated, for simplicity, for the flat part of the edge calculus.
\begin{Thm} \label{1.prep}
Let $A(\lambda )\in L^\mu_{\textup{flat}} (M,{\bf{g}}_{\gamma,\mu};\R^l)$ be a $\sigma _0$-elliptic operator. Then for every $y\in s_1(M)$ there is a discrete set $D(y)\subset \C$ intersecting $\{z\in \C:c\leq \textup{Re}\,z\leq c'\}$ in a finite set for every $c\leq c'$ such that
\begin{equation} \label{11.bijsob}
f(y,z):H^s(X)\rightarrow H^{s-\mu}(X) 
\end{equation}
are isomorphisms for all $z\in \C\setminus D(y)$ and all $s\in \R.$
\end{Thm}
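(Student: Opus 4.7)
The plan is to realise the conormal symbol $f(y,z)$ as a holomorphic family of parameter-dependent elliptic operators in the sense of Definition \ref{11.mehol} and then invoke the analytic Fredholm theorem. From the construction of the flat edge symbol, $\sigma_1(A)$ contains the Mellin contribution $r^{-\mu}\op_M^{\gamma-n/2}(h_0)$ with $h_0(r,y,z,\eta,\lambda)=\tilde h(0,y,z,r\eta,r\lambda)$, so its subordinate principal conormal symbol is
\[
f(y,z)=\tilde h(0,y,z,0,0)\in M^\mu_{\mathcal O}(X).
\]
Thus $f(y,\cdot)\in\mathcal A(\C,L^\mu_{\textup{cl}}(X))$, and its restriction to every weight line $\Gamma_\beta$ lies in $L^\mu_{\textup{cl}}(X;\Gamma_\beta)$, uniformly on compact $\beta$-intervals.

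Next I would extract parameter-dependent ellipticity of $f(y,\cdot)$ on each $\Gamma_\beta$ from the hypothesis on $A$. The Mellin quantisation result Theorem \ref{1.ph} links $\tilde h(r,y,z,\tilde\eta,\tilde\lambda)$ and $\tilde p(r,y,\tilde\rho,\tilde\eta,\tilde\lambda)$ in such a way that their homogeneous principal symbols in the joint parameters $(\textup{Im}\,z,\tilde\eta,\tilde\lambda)$ and $(\tilde\rho,\tilde\eta,\tilde\lambda)$ agree after the standard substitution. Restricting to $r=0$ and $\tilde\eta=\tilde\lambda=0$, the $\sigma_0$-ellipticity of $A$, i.e., the non-vanishing of $\tilde\sigma_0(A)(0,x,y,\tilde\rho,\xi,0,0)$ on $(\tilde\rho,\xi)\ne 0$, translates into parameter-dependent ellipticity of $\tilde h(0,y,z,0,0)$ as an element of $L^\mu_{\textup{cl}}(X;\Gamma_\beta)$ with parameter $\textup{Im}\,z$, uniformly in $\beta$ on compact intervals.

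Parameter-dependent ellipticity provides a parametrix in $L^{-\mu}_{\textup{cl}}(X;\Gamma_\beta)$, so $f(y,\beta+i\tau):H^s(X)\to H^{s-\mu}(X)$ is invertible for $|\tau|$ sufficiently large, uniformly in $\beta$ on compact intervals; in particular $f(y,z_0)$ is invertible at some point $z_0$. For every $z$, $f(y,z)$ is elliptic in the classical sense and hence Fredholm between the relevant Sobolev spaces, and the index is constant along the connected holomorphic family, hence zero. The analytic Fredholm theorem (of Gohberg--Sigal type) then yields that the exceptional set $D(y):=\{z\in\C:f(y,z) \textup{ not invertible}\}$ is discrete in $\C$, and the uniform invertibility for large $|\textup{Im}\,z|$ forces $D(y)\cap\{c\leq\textup{Re}\,z\leq c'\}$ to be finite for every $c\leq c'$. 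The $s$-independence is automatic: the inverse, being equal to the parameter-dependent parametrix modulo smoothing remainders, belongs to $L^{-\mu}_{\textup{cl}}(X)$ and hence maps $H^{s-\mu}(X)\to H^s(X)$ for every $s\in\R$, so invertibility at a given $z$ is independent of the Sobolev order.

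The main obstacle will be to pin down, rigorously, the identification of the parameter-dependent principal symbol of the Mellin-quantised family $\tilde h(0,y,z,0,0)$ on $\Gamma_\beta$ with the restriction of $\tilde\sigma_0(A)$ to $r=0,\,\tilde\eta=\tilde\lambda=0$, so that $\sigma_0$-ellipticity of $A$ indeed entails parameter-dependent ellipticity of $f(y,\cdot)$ on \emph{every} vertical line. Once this symbolic link is secured, the passage to a holomorphic Fredholm family and the discreteness/finiteness properties of $D(y)$ are a routine application of holomorphic operator-theoretic techniques.
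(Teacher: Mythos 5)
Your proposal is correct and follows essentially the same route as the paper: the paper states Theorem \ref{1.prep} without a written-out proof, but its second-order analogue, Theorem \ref{2.prep}, is proved exactly as you argue --- the $\sigma_0$-ellipticity yields ellipticity of $f(y,z)$ for every $z$ (hence Fredholmness) and parameter-dependent ellipticity of $f|_{\Gamma_\beta}$ uniformly in compact $\beta$-intervals (hence invertibility for large $|\textup{Im}\,z|$), and then the analytic Fredholm theorem for the holomorphic family gives a discrete exceptional set $D(y)$ meeting each strip $\{c\leq\textup{Re}\,z\leq c'\}$ in a finite set, with $s$-independence by elliptic regularity. The symbolic identification you flag as the remaining obstacle is exactly the point the paper absorbs as standard cone/edge-calculus material, so your argument matches the paper's own level of detail.
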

 Since the $\sigma _0$-ellipticity of $A$ entails the $\sigma _0$-ellipticity of $\sigma _1(A)(y,\eta,\lambda  )$ as an operator in the cone algebra over 
$X^\wedge$ (including the exit-ellipticity for $r\rightarrow \infty $) we have the Fredholm property of $\eqref{11.prc}$ only for those $\gamma \in \R$ such that $\Gamma _{(n+1)/2-\gamma }\cap D(y)=\emptyset.$
If we insist on the Fredholm property for a particular weight $\gamma _1,$ according to \cite{Mali1} we can add to $A$ a smoothing Mellin operator $M$ to obtain for $A_1:=A+M$ in place of $A$ a Fredholm family $\eqref{11.prc}$ for $\gamma =\gamma _1.$What concerns the general structures we concentrate on the case of the bijectivity, provided that a topological obstruction (similarly as a corresponding obstruction in boundary value problems, cf. Atiyah and Bott \cite{Atiy5} ) is vanishing. Since the consequences will be crucial here we briefly recall the main points. First we have the following homogeneity relation
\begin{equation} \label{11.prhom}
\sigma _1(A)(y,\delta \eta,\delta \lambda  )=\delta^\mu   \kappa_ \delta^{-1} \sigma _1(A)(y, \eta,\lambda  )\kappa_ \delta ,\,\delta \in \R_+. 
\end{equation}
Thus, for $S:=\{(y, \eta,\lambda )\in T^*s_1(M)\times \R^l, |\eta ,\lambda |=1\}$ with the canonical projection $\pi :S\rightarrow s_1(M),$ the restriction of $\eqref{11.prc}$ to $S$ (for brevity denoted again by $\sigma _1(A)$) allows us to recover $\eqref{11.prc}$ in a unique way, but the restriction is a Fredholm family parametrised by the compact set $S.$ This gives us an element in the $K$-group over $S$ in a well-known manner, namely, $\textup{ind}_{S}\,\sigma _1(A)\in K(S).$ Now if $\eqref{11.prc}$ only consists of Fredholm operators rather than isomorphisms, the idea is (similarly as in boundary value problems) to fill up the mapping by finite rank operators to a $2\times 2$ block matrix of isomorphisms. In boundary value problems this construction is known to generate the symbols of additional conditions of trace and potential type. The same is the case in edge problems. However, the extra $(y, \eta,\lambda )$-depending operator families cannot always be interpreted as such symbols. A necessary and sufficient condition for that is the relation
\begin{equation} \label{11.obstr}
\textup{ind}_{S}\,\sigma _1(A)\in K(S)\in \pi ^*K(s_1(M))
\end{equation}
where $\pi ^*$ is the homomorphism between the respective $K$-groups, induced by the bundle pull back. There are non-trivial examples where $\eqref{11.obstr}$ is violated, 
cf. Nazaikinskij, Savin, Schulze, Sternin \cite{Naza6}. Note that in such a case a Fredholm theory of edge problems can always be organised in terms of global projection conditions, see Schulze, Seiler \cite{Schu42}, analogously as the well-known projection conditions of APS-type. Throughout this paper we assume that $\eqref{11.obstr}$ is satisfied. It will be adequate in this case to employ extra smoothing terms from $L^\mu _{\textup{as}}(M,{\bf{g}}_{\gamma,\mu};\R^l)$ to achieve isomorphisms. 
\begin{Thm} \label{bje}
Let $A(\lambda )\in L^\mu (M,{\bf{g}}_{\gamma,\mu};\R^l)$ be a $\sigma _0$-elliptic operator, and let $\gamma \in \R$ such that $\eqref{11.prc}$ is a family of Fredholm operators. Then there exists an $(M+G)(\lambda )\in L^\mu_{\textup{as}}(M,{\bf{g}}_{\gamma,\mu};\R^l)$ such that
\begin{equation} \label{11.bij}
\sigma _1(A+M+G)(y,\eta,\lambda  ): \mathcal{K}^{s,\gamma}(X^\wedge)\rightarrow \mathcal{K}^{s-\mu ,\gamma-\mu }(X^\wedge)
\end{equation}
is a family of isomorphisms.
\end{Thm}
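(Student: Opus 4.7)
The plan follows the standard K-theoretic procedure of correcting a parametrized Fredholm family to a family of isomorphisms, adapted to the edge calculus. By the homogeneity relation \eqref{11.prhom}, everything reduces to the compact sphere bundle $S=\{(y,\eta,\lambda)\in T^*s_1(M)\times\R^l:|\eta,\lambda|=1\}$: the restriction $\sigma_1(A)|_S$ is a continuous Fredholm family $\mathcal{K}^{s,\gamma}(X^\wedge)\to\mathcal{K}^{s-\mu,\gamma-\mu}(X^\wedge)$ with index class $\textup{ind}_S\sigma_1(A)\in K(S)$, which by \eqref{11.obstr} lies in $\pi^*K(s_1(M))$.

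First I would use the compactness of $S$ together with stabilization: there exist $N\in\N$ and a continuous family $k_0:S\to\mathcal{L}(\C^N,\mathcal{K}^{\infty,\gamma-\mu}_P(X^\wedge))$, for a prescribed asymptotic type $P$, such that $(\sigma_1(A)|_S,k_0)$ is fibrewise surjective. The kernel bundle of this surjection has class $\textup{ind}_S\sigma_1(A)-[\C^N]\in\pi^*K(s_1(M))$; after further stabilization it equals $\pi^*J'$ for some vector bundle $J'\to s_1(M)$. Composing with a splitting of $\pi^*J'$ yields on $S$ a finite rank correction $g_0(y,\eta,\lambda)\in\mathcal{L}(\mathcal{K}^{s,\gamma}(X^\wedge),\mathcal{K}^{\infty,\gamma-\mu}_P(X^\wedge))$, together with a formal-adjoint counterpart encoding the asymptotic type $Q$ on the opposite side, such that $\sigma_1(A)|_S+g_0$ is fibrewise bijective.

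The second step is to extend $g_0$ to a global Green symbol of order $\mu$ by twisted homogeneity,
\begin{equation*}
g(y,\eta,\lambda):=\chi(\eta,\lambda)\,|\eta,\lambda|^\mu\,\kappa^{-1}_{|\eta,\lambda|}\,g_0\!\left(y,\tfrac{(\eta,\lambda)}{|\eta,\lambda|}\right)\kappa_{|\eta,\lambda|},
\end{equation*}
with an excision function $\chi$. A direct check against the definitions \eqref{12.Greenas}--\eqref{12.Greenstar} shows that $g\in\mathcal{R}^\mu_G(\Omega\times\R^{q+l},\mathbf{g}_{\gamma,\mu})$ and that its principal part agrees with $g_0$ on $S$, so $\sigma_1(A)+g$ is bijective on all of $T^*s_1(M)\times\R^l\setminus 0$. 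Using a partition of unity on $s_1(M)$ subordinate to the local edge charts \eqref{11.wedge}, one glues these local Green symbols into an operator $G(\lambda)\in L^\mu_{\textup{as}}(M,\mathbf{g}_{\gamma,\mu};\R^l)$ (absorbing a smoothing Mellin contribution $M(\lambda)$ if one wishes to isolate the conormal-symbol correction), and by construction $\sigma_1(A+M+G)$ is a family of isomorphisms as required.

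The main obstacle will be verifying that the homogeneously extended $g$ is actually a \emph{classical} operator-valued symbol with the required asymptotic-type control on both $g$ and $g^*$. The classical symbol estimates follow formally once the extension is smoothed near $(\eta,\lambda)=0$, but the smooth $y$-dependence of $g_0$ and the uniformity of the asymptotic types $P,Q$ force one to choose the finite rank correction from a fixed family of generators of $\mathcal{E}_P\subset\mathcal{K}^{\infty,\gamma-\mu}_P(X^\wedge)$, of the form $\omega(r)M^{-1}_{w\to r}\langle\zeta,\Phi(z,w)\rangle$, composed with smooth sections of the finite rank bundle $J'$ over $s_1(M)$; this is precisely the point where the vanishing of the obstruction \eqref{11.obstr} enters concretely.
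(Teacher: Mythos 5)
Your construction breaks down at the decisive point: you try to achieve bijectivity by adding a Green-type correction $g$ alone, but Green symbols take values in compact (indeed finite rank, smoothing and flattening) operators $\mathcal{K}^{s,\gamma}(X^\wedge)\rightarrow\mathcal{K}^{s-\mu,\gamma-\mu}(X^\wedge)$, so $\sigma_1(A)+g$ has the same pointwise Fredholm index as $\sigma_1(A)$. Hence your claim that ``$\sigma_1(A)|_S+g_0$ is fibrewise bijective'' can only hold if $\textup{ind}_S\,\sigma_1(A)=0$ in $K(S)$ (in particular the numerical index must vanish), which is not assumed: the hypothesis \eqref{11.obstr} only says $\textup{ind}_S\,\sigma_1(A)=[\pi^*E_+]-[\pi^*E_-]$ is a pull-back from $s_1(M)$, not that it is zero. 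What your stabilization argument actually produces is the $2\times 2$ block-matrix completion by trace and potential entries (which is exactly what \eqref{11.obstr} licenses); passing from that to an upper-left-corner correction, as the theorem asserts, requires first killing the index class, and no finite rank term can do that. (There is also a small bookkeeping slip: the kernel bundle of the surjective row $(\sigma_1(A)|_S,\,k_0)$ has class $\textup{ind}_S\,\sigma_1(A)+[\C^N]$, not $-[\C^N]$, but this is minor compared with the structural issue.)

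This is precisely why the statement contains the smoothing Mellin summand $M$, which your proof never constructs or uses. The paper's route is: for every $j\in\Z$ there is a smoothing Mellin symbol $f$ with $1+\omega\,\textup{op}_M(f)\,\omega$ of index $j$ on $L^2(\R_+)$ — smoothing Mellin operators are not compact perturbations in the weighted cone spaces, since they shift spectral data across the weight line — and from this one builds $f\in\ci(s_1(M),M^{-\infty}_{\mathcal{O}}(X))$ and $B:=1+\omega_{\eta,\lambda}\,\textup{op}_M^{\gamma-n/2}(f)\,\omega_{\eta,\lambda}$ with $\textup{ind}_S\,\sigma_1(B)=[E_-]-[E_+]$; the condition \eqref{11.obstr} enters exactly here, because $f$ depends only on $y$ and so can only realize classes pulled back from $s_1(M)$. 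Then $\textup{ind}_S\,\sigma_1(AB)=0$, and only at that stage does a flat Green operator $G$ with finite-rank-valued symbols yield a bijective $\sigma_1(AB+G)$; since $AB+G=A$ modulo $L^\mu_{\textup{as}}(M,{\bf{g}}_{\gamma,\mu};\R^l)$, this gives the asserted additive correction $M+G$. To repair your argument you would need to insert this index-shifting Mellin step (or an equivalent device) before your Green-symbol construction; the homogeneous extension and asymptotic-type bookkeeping in your last two paragraphs are fine but do not address this gap.
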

\begin{proof}
By assumption we have $\textup{ind}_{S}\,\sigma _1(A)= [E_+]-[E_-]$ for certain smooth complex vector bundles $E_+,E_-$ over $s_1(M).$ First observe that for every $j\in \Z$ there exists a  smoothing Mellin
operator $\op_M(f): L^2(\R_+)\rightarrow L^2(\R_+)$ for a symbol $f(z)\in S^{-\infty }_{\textup{cl}}(\Gamma _{1/2})$ which extends to a function $h\in \mathcal{A}(\C)$ with $h|_{\Gamma_{1/2}}=f|_{\Gamma_{1/2}}$ and $f(z)|_{\Gamma_{1/2-\beta }}\in S^{-\infty }_{\textup{cl}}(\Gamma _{1/2-\beta })$ for every $\beta \in \R,$ such that
$\textup{ind}\,\big(1+\omega \op_M(f)\omega \big)=j.$ This can be used to construct an element $f\in \ci(s_1(M),M_{\mathcal{O}}^{-\infty }(X))$ and an operator $B:=1+\omega_{\eta,\lambda }\op_M^{\gamma -n/2}(f) \omega_{\eta,\lambda },\,\omega_{\eta,\lambda }=\omega (r[\eta,\lambda]),$ such that $ \textup{ind}_{S}\,\sigma _1(B )=[E_-]-[E_+]. $ The expression for $B$ is an abbreviation for a corresponding operator which is first of such a form locally close to $s_1(M)$ and then defined globally by applying a partition of unity. The composition $AB$ belongs to $L^\mu (M,{\bf{g}}_{\gamma,\mu};\R^l),$ and we have $\textup{ind}_{S}\,\sigma _1(AB)=0.$ Now there is a flat Green operator $G\in L^\mu (M,{\bf{g}}_{\gamma,\mu};\R^l),$ i.e., with local symbols in $\mathcal{R}_{G,\mathcal{O}}^\mu (\Omega\times\R_{\eta,\lambda}^{q+l})$ (taking values in finite rank operators) such that $AB+G\in L^\mu (M,{\bf{g}}_{\gamma,\mu};\R^l)$ has a bijective edge symbol
$\sigma _1(AB+G).$ 
\end{proof}
As a consequence of the latter construction we have $\sigma _0(AB+G)=\sigma _0(A)$ and $AB+G=A \,\,\mbox{mod}\,L_{\textup{as}}^\mu (M,{\bf{g}}_{\gamma,\mu};\R^l). $

\section{Operators on an infinite singular cone}

\subsection{Corner-degenerate operators}
Let $B$ be a compact manifold with smooth edge $s_1(B),$ and consider the infinite cone $B^\Delta=(\overline{\R}_+\times B)/(\{0\}\times B),$ or its stretched version $B^\wedge=\R_+\times B$ in the splitting of variables $(t,b).$ The vertex $\{c\}$ of $B^\Delta$ is a singularity of second order (in our terminology), while $B^\wedge=B^\Delta\setminus \{c\}$ is a manifold with smooth edge $s_1(B)^\wedge.$ Algebras of pseudo-differential operators on cones $B^\Delta$ close to $c$ are studied in \cite{Schu27}, including ellipticity and (iterated) asymptotics of solutions in spaces with double weights. Elements of the calculus for $t\rightarrow\infty$ are developed in \cite{Calv2} and \cite{Calv3}. Similarly as edge symbols $\sigma_1$ associated with $s_1(B),$ operating in $\mathcal{K}^{s,\gamma}(X^\wedge)$-spaces, $s,\gamma\in\R,$ Fredholm in the elliptic case, on a wedge $W:=B^\Delta\times\Sigma,\,\Sigma\subseteq\R^d$ open, we should study edge symbols $\sigma_2$ associated with $s_2(W)=\Sigma,$ acting in spaces of the kind $\mathcal{K}^{s,\gamma,\theta}(B^\wedge),$ for $s,\gamma,\theta \in\R.$ It becomes increasingly difficult to explicitly control the data that are involved in the ellipticity and the Fredholm property. Here we construct new classes of elliptic operators. To illustrate the situation we first consider corner-degenerate differential operators. On the manifold $B$ with edge we have the space $\mbox{Diff}^m_{\textup{deg}}(B)$ of all $A\in\mbox{Diff}^m(s_0(B))$ that are close to $s_1(B)$ in the local variables $(r,x,y)\in X^\wedge\times \Omega$ of the form $\eqref{ecE}.$ Now on the wedge $W$ we have the space $\mbox{Diff}^m_{\textup{deg}}(W)$ of all $A\in \mbox{Diff}^m_{\textup{deg}}(B^\wedge\times\Sigma)$ (taking into account that $B^\wedge\times\Sigma$ is a manifold with edge) which are in the variables $(t,b,v)\in B^\wedge\times \Sigma$ of the form 
$$A=t^{-m}\!\!\sum_{j+|\alpha |\leq m}a_{j\alpha }(t,v)( -t\partial_t)^j(tD_v)^\alpha $$ 
with coefficients $a_{j\alpha}\in \ci(\overline{\R}_+\times\Sigma,\mbox{Diff}_{\textup{deg}}^{m-(j+|\alpha|)}(B)).$ Similarly as $\eqref{sy1}$
we have a homogeneous principal edge symbol
\begin{equation} \label{21.edge}
\sigma_1(A)(v,\zeta):=t^{-m}\!\!\sum_{j+|\alpha |\leq m}a_{j\alpha }(0,v)( -t\partial_t)^j(t\zeta)^\alpha\,:\mathcal{K}^{s,\gamma,\theta}(B^\wedge)\rightarrow\mathcal{K}^{s-m,\gamma-m,\theta-m}(B^\wedge), 
\end{equation}
acting between analogues of the $\mathcal{K}^{s,\gamma}$-spaces, but here with the extra weight $\theta$ belonging to the corner axis variable $t.$ We recall the definition of those spaces below (in a new form, compared with \cite{Calv2}); $\zeta$ is the covariable to $v\in\Sigma,$ and \eqref{21.edge} is considered for $(v,\zeta)\in\Sigma\times(\R^d\setminus\{0\}).$

\begin{Def} \label{2.mehol}
$\textup{(i)}$ Let $M^m_{\mathcal{O}}(B,\g_{\gamma,\mu};\R^l)$ denote the space of all $h(w,\lambda)\in\mathcal{A}(\C,L^m(B,\g_{\gamma,\mu};\R^l))$ such that $h(\delta+i\tau,\lambda)\in L^m(B,\g_{\gamma,\mu};\R^{1+l})$ for every $\delta\in\R,$ uniformly in compact $\delta$-intervals.\\\\
$\textup{(ii)}$ Let $R:=\{r_j\}_{j\in \Z}\subset \C$ be a set such that $R\cap \{w\in \C:c\leq c'\}$ is finite for every $c\leq c'.$ Then $M^{-\infty}_R (B,\g_{\gamma,\mu})$ denotes the set of all $h(w)\in\mathcal{A}(\C\setminus R,L^{-\infty }(B,\g_{\gamma,\mu}))$ such that $(\chi h)(\delta+i\tau)\in L^{-\infty }(B,\g_{\gamma,\mu};\R)$ for every $\delta\in\R,$ uniformly in compact $\delta$-intervals, for any $R$-excision function $\chi .$\\\\
$\textup{(iii)}$ We set $$M^m_R(B,\g_{\gamma,\mu}):=M^m_{\mathcal{O}}(B,\g_{\gamma,\mu})+M^{-\infty}_R (B,\g_{\gamma,\mu}).$$
\end{Def}
Given an element $a(w,\lambda)\in L^m(B,\g_{\gamma,\mu};\Gamma_0\times\R^l)$ we define a Mellin kernel cut-off operator
\begin{equation}         \label{cut}
V(\varphi)a(i\tau,\lambda):=\int_0^\infty s^{i\tau}\varphi(s)k(a)(s,\lambda)s^{-1}ds
\end{equation}
for $\varphi\in\ci_0(\R_+),$ and $k(a)(s,\lambda):=\int_{-\infty}^\infty s^{-i\tau}a(i\tau,\lambda)\dbar \tau.$
\begin{Thm} \label{2.conv}
The formula $\eqref{cut}$ defines an operator
\begin{equation}
V(\varphi):L^m(B,\g_{\gamma,\mu};\Gamma_0\times\R^l)\rightarrow M^m_{\mathcal{O}}(B,\g_{\gamma,\mu};\R^l),
\end{equation}
$V(\varphi ):f\rightarrow h,$ separately continuous in $\varphi$ and $f\in L^m.$ For every $\psi\in\ci_0(\R_+)$ that is equal to $1$ in a neighbourhood of $s=1$ we have 
\begin{equation}
f=V(\psi)f|_{\Gamma_0\times\R^l} \quad\textup{mod}\,L^{-\infty}(B,\g_{\gamma,\mu};\Gamma_0\times\R^l).
\end{equation}
Moreover, setting $\psi_{\varepsilon}(s)=\psi(\varepsilon \textup{log}\,s),\varepsilon >0,$ we have
$\textup{lim}_{\varepsilon\rightarrow 0}V(\psi_{\varepsilon})f|_{\Gamma_0\times\R^l}=f$ with convergence in $L^m.$
\end{Thm}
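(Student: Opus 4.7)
The plan is to convert the Mellin kernel cut-off into a Fourier kernel cut-off via the substitution $s=e^{-t}$, and then to exploit the fact that $\tau$-convolution with a Schwartz function preserves the class $L^m(B,\g_{\gamma,\mu};\R^{1+l}_{\tau,\lambda})$ with continuous dependence on the Schwartz seminorms. Writing $\tilde\varphi(t):=\varphi(e^{-t})\in \ci_0(\R_t)$ and substituting in $\eqref{cut}$ gives
\[
V(\varphi)a(w,\lambda)=\int_\R e^{-wt}\tilde\varphi(t)\,k(a)(e^{-t},\lambda)\,dt,
\]
and by the definition of $k(a)$ in the statement we see that $k(a)(e^{-t},\lambda)=\int_\R e^{it\tau}a(i\tau,\lambda)\dbar\tau$ is, up to normalization, the inverse Fourier transform in $\tau$ of $a(i\tau,\lambda)$. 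Because $\tilde\varphi$ is compactly supported in $\R$, the kernel $(w,t)\mapsto e^{-wt}\tilde\varphi(t)$ is smooth and entire in $w$ with $w$-derivatives uniformly bounded on compact $w$-sets; differentiation under the integral gives holomorphy of $w\mapsto V(\varphi)a(w,\lambda)$ on all of $\C$.

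For the symbol estimates on the line $\Gamma_\beta$, I would observe that
\[
V(\varphi)a(\beta+i\tau,\lambda)=\int_\R e^{-i\tau t}\bigl(e^{-\beta t}\tilde\varphi(t)\bigr)k(a)(e^{-t},\lambda)\,dt=\bigl(a(i\cdot,\lambda)*\Phi_\beta\bigr)(\tau),
\]
where $\Phi_\beta(\tau)$ is the Fourier transform in $t$ of $e^{-\beta t}\tilde\varphi(t)\in \ci_0(\R_t)$, and the convolution is in the $\tau$-variable. Since $\beta\mapsto e^{-\beta t}\tilde\varphi(t)$ is continuous into $\ci_0(\R_t)$, the family $\{\Phi_\beta\}_{\beta\in K}$ is bounded in $\mathcal{S}(\R_\tau)$ for every compact $K\subset\R$, with all its Schwartz seminorms depending continuously on $\beta$. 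The key structural fact is then that $L^m(B,\g_{\gamma,\mu};\R_\tau\times\R^l_\lambda)$ is closed under $\tau$-convolution with Schwartz functions, with continuous dependence on the Schwartz seminorms; combined with the bound on $\Phi_\beta$ this yields $V(\varphi)a|_{\Gamma_\beta}\in L^m(B,\g_{\gamma,\mu};\R^{1+l})$ with estimates uniform in $\beta$ on compact intervals. Hence $V(\varphi)a\in M^m_{\mathcal{O}}(B,\g_{\gamma,\mu};\R^l)$, and separate continuity in $\varphi$ and in $f$ follows simultaneously.

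For the remaining two assertions I pick $\psi\in\ci_0(\R_+)$ equal to $1$ near $s=1$, so that $\tilde\psi(t):=\psi(e^{-t})$ equals $1$ near $t=0$. Then $f-V(\psi)f|_{\Gamma_0\times\R^l}$ is, via the same computation, the Fourier transform in $t$ of $(1-\tilde\psi(t))k(a)(e^{-t},\lambda)$. Since $k(a)(e^{-t},\lambda)$ is the inverse Fourier transform of a parameter-dependent symbol, it is smooth in $t$ away from $t=0$ with polynomially bounded operator-norm derivatives, so multiplication by $(1-\tilde\psi)$ removes the singularity at $t=0$ and produces a function in $\mathcal{S}(\R_t,L^{-\infty}(B,\g_{\gamma,\mu};\R^l))$; its Fourier transform in $t$ accordingly lies in $L^{-\infty}(B,\g_{\gamma,\mu};\Gamma_0\times\R^l)$. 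For $\psi_\varepsilon(s)=\psi(\varepsilon\log s)$ we have $\tilde\psi_\varepsilon(t)=\psi(-\varepsilon t)\to 1$ pointwise with all derivatives tending to $0$; the Schwartz functions $\Phi^\varepsilon_0$ associated to $\tilde\psi_\varepsilon$ form a standard approximate identity in $\mathcal{S}(\R_\tau)$ convergent to $\delta_0$, so using the continuity of $\tau$-convolution on $L^m$ established above one obtains $V(\psi_\varepsilon)f|_{\Gamma_0\times\R^l}\to f$ in $L^m$.

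The principal obstacle is the structural claim invoked in the second paragraph: the invariance of $L^m(B,\g_{\gamma,\mu};\R^{1+l})$ under $\tau$-convolution with Schwartz functions, together with continuity in the Schwartz seminorms. For the flat summand this reduces to the well-known fact that parameter-dependent amplitude functions $\tilde p(r,y,\tilde\rho,\tilde\eta,\tilde\lambda,\tilde\tau)$ survive convolution in one parameter component with a Schwartz function (rapid decay absorbs the polynomial symbol growth), while for the asymptotic summand one has to check that smoothing Mellin and Green symbols remain of the same type --- including preservation of the continuous asymptotic types $P,Q,R$ --- after $\tau$-convolution; this bookkeeping through the decomposition $\eqref{11.alg}$ is the genuinely technical part, and once it is in place all three conclusions of the theorem fall out at once.
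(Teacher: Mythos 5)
First, a point of reference: the paper does not prove Theorem \ref{2.conv} at all --- it only states it and, in the closing remark of the section, points to the technique of Witt \cite{Witt3} and to \cite{Mali1} for the analogue over a smooth compact base (and implicitly to \cite{Mani2} for the edge-valued case). So there is no in-paper argument to compare with line by line. Your outline is the standard kernel cut-off architecture --- substitute $s=e^{-t}$, write $V(\varphi)a(\beta+i\tau,\lambda)$ as a $\tau$-convolution of $a|_{\Gamma_0}$ with the Fourier transform of $e^{-\beta t}\tilde\varphi(t)$, get entirety in $w$ by differentiating under the (oscillatory) integral, obtain the mod-$L^{-\infty}$ statement by excising $t=0$, and the $\varepsilon\to 0$ limit from an approximate identity --- and that is indeed the route taken in the sources the paper defers to.

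The genuine gap is the step you yourself flag and then treat as bookkeeping: the claim that $L^m(B,\g_{\gamma,\mu};\R_\tau\times\R^l_\lambda)$ is stable under $\tau$-convolution with Schwartz functions, continuously in the Schwartz seminorms. For scalar symbols and for Green symbols this is a Peetre-inequality exercise, but membership in the parameter-dependent edge class is not characterised by symbol estimates alone: by \eqref{11.alg}, \eqref{1.quant}, \eqref{11.algas}, an element is, modulo smoothing remainders, a sum of quantised pieces $r^{-m}\omega\,\op_M^{\gamma-n/2}(h)(y,\eta,\tau,\lambda)\,\omega'$ with $h=\tilde h(r,y,z,r\eta,r\tau,r\lambda)$ and $\tilde h$ holomorphic-Mellin-valued, plus smoothing Mellin terms built with $\omega(r[\eta,\tau,\lambda])$ and Green symbols carrying asymptotic types, plus global smoothing parts. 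Convolution in $\tau$ does not act in any transparent way inside the arguments $r\tau$ and $[\eta,\tau,\lambda]$, so one has to verify summand by summand that the cut-off integral reproduces an element of the same quantised form modulo flat Green/smoothing remainders, and that classicality and the asymptotic types survive; moreover $k(a)(s,\lambda)$ is only a distribution in $s$, so \eqref{cut} and your convolution identity must be read as regularised (oscillatory) integrals from the start. The same structural input is needed once more to upgrade your approximate-identity argument from pointwise convergence of convolutions to convergence in the topology of $L^m$, and to see that the excised term $(1-\tilde\psi)k(a)$ really lands in $\mathcal{S}$ with values in $L^{-\infty}(B,\g_{\gamma,\mu})$ rather than merely in operators of low order. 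This verification is the actual content of the theorem; as written, your proposal asserts it rather than proves it, which is exactly the part the paper outsources to \cite{Witt3}, \cite{Mali1}, \cite{Mani2}.
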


\begin{Thm} \label{2.me}
For every $\tilde{p}(t,t',\tilde{\tau},\tilde{\lambda})\in\ci(\overline{\R}_+\times\overline{\R}_+,L^m(B,\g_{\gamma,\mu};\R^{1+l}))$ there is an $\tilde{h}(t,t',w,\tilde{\lambda})\in \ci(\overline{\R}_+\times\overline{\R}_+,M^m_{\mathcal{O}}(B,\g_{\gamma,\mu};\R^l)),$ such that for every $\delta\in\R$ $$\textup{Op}_t(p)(\lambda)=\textup{op}^\delta_M(h)(\lambda)\quad \textup{mod}\,L^{-\infty}(B^\wedge,\g_{\gamma,\mu};\R^l)$$ for $p(t,t',\tau,\lambda):=\tilde{p}(t,t',t\tau,t\lambda),\,h(t,t',w,\lambda):=\tilde{h}(t,t',
w,t\lambda).$
\end{Thm}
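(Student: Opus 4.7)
Theorem \ref{2.me} is the $L^m(B,\bsg_{\gamma,\mu})$-valued analogue of Schulze's Mellin quantization Theorem \ref{1.ph}: the target space of values has been enlarged from the classical pseudo-differential algebra $L^m_\cl(X;\R^{1+q+l})$ on a closed manifold to the parameter-dependent edge algebra $L^m(B,\bsg_{\gamma,\mu};\R^{1+l}).$ The plan is to follow the same two-step scheme as for Theorem \ref{1.ph}: first produce a holomorphic Mellin representative of $\tilde p$ on the line $\Gamma_0$ using the kernel cut-off of Theorem \ref{2.conv} (which now plays the role of its scalar analogue), then verify agreement of the Fourier- and Mellin-based quantizations modulo corner-smoothing operators. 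Because every step operates abstractly on $L^m(B,\bsg_{\gamma,\mu};\cdot)$-valued quantities, the presence of the edge in the base $B$ causes no new formal difficulty, provided Theorem \ref{2.conv} is used as a black box.

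\textbf{Construction and comparison.} Fix $\psi\in\ci_0(\R_+)$ equal to $1$ near $s=1.$ Treating $(t,t')$ as smooth parameters, apply the kernel cut-off of Theorem \ref{2.conv} to $\tilde p(t,t',\cdot,\tilde\lambda)$ in the $\tilde\tau$-variable (identified with $\imag w$) and set
$$\tilde h(t,t',w,\tilde\lambda):=\bigl(V(\psi)\tilde p(t,t',\cdot,\tilde\lambda)\bigr)(w)\in M^m_{\mathcal{O}}(B,\bsg_{\gamma,\mu};\R^l),$$
joint smoothness in $(t,t')$ up to the boundary being a consequence of the separate continuity of $V(\psi).$ Put $h(t,t',w,\lambda):=\tilde h(t,t',w,t\lambda).$ The identity
$$\textup{Op}_t(p)(\lambda)\equiv \textup{op}^\delta_M(h)(\lambda)\quad \textup{mod}\,L^{-\infty}(B^\wedge,\bsg_{\gamma,\mu};\R^l)$$
for $\delta=1/2$ then follows from the relation $\tilde h|_{\Gamma_0}\equiv\tilde p\ \textup{mod}\,L^{-\infty}$ (second assertion of Theorem \ref{2.conv}), combined with the standard oscillatory-integral manipulation, built on the change of variables $t=e^{-x}$, that converts the Fourier-based quantization in $t$ into a Mellin-based one; this is the same manipulation as in the proof of Theorem \ref{1.ph}. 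Independence of the weight line $\delta$ results from a contour shift in the integration against the holomorphic amplitude $\tilde h$, the horizontal contributions vanishing by the uniform vertical-strip bounds built into Definition \ref{2.mehol}.

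\textbf{Main obstacle.} The point that requires genuine care is to verify that every remainder is smoothing in the correct \emph{corner} sense, i.e.\ lies in $L^{-\infty}(B^\wedge,\bsg_{\gamma,\mu};\R^l)$ with the prescribed edge weight data and with rapid decay in $\lambda,$ rather than merely in some coarser smoothing class. This reduces to invoking Theorem \ref{2.conv} in its $L^{-\infty}$-version, which guarantees that the cut-off error $\tilde p-\tilde h|_{\Gamma_0}$ is $L^{-\infty}(B,\bsg_{\gamma,\mu};\Gamma_0\times\R^l)$-valued, and then tracking this error through the rescaling $\tilde\tau=t\tau,\,\tilde\lambda=t\lambda$: for $t$ in compact subsets of $\R_+$ the symbol estimates are preserved, while the behaviour as $t\to 0$ and $t\to\infty$ is absorbed into the corner/edge and exit structure built into $L^{-\infty}(B^\wedge,\bsg_{\gamma,\mu};\R^l).$
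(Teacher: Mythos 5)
There is a genuine gap, and it sits exactly at the definition of $\tilde{h}$. You set $\tilde{h}:=V(\psi)\tilde{p}$, i.e.\ you apply the Mellin kernel cut-off of Theorem \ref{2.conv} directly to $\tilde{p}$ in the covariable $\tilde{\tau}$ (identified with $\textup{Im}\,w$), and then claim $\textup{Op}_t(p)\equiv\textup{op}^\delta_M(h)$ mod $L^{-\infty}$. But passing from the Fourier quantization in $t$ to the Mellin quantization is \emph{not} the identity on amplitudes modulo smoothing: the two quantizations use different phase functions ($(t-t')\tau$ versus $\log(t/t')\cdot\textup{Im}\,w$), and the induced map on amplitudes is a nontrivial oscillatory-integral transformation which agrees with the identity only to leading order. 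A one-dimensional example already kills the construction: take $\tilde{p}(\tilde{\tau})=-\tilde{\tau}^2$, so $p(t,\tau)=-t^2\tau^2$ and $\textup{Op}_t(p)=t^2\partial_t^2$. Since $-t\partial_t=\textup{op}_M^{1/2}(w)$ and $t^2\partial_t^2=(t\partial_t)^2-t\partial_t$, the exact Mellin symbol of $\textup{Op}_t(p)$ is $w(w+1)$; on the other hand $V(\psi)\tilde{p}$ coincides on $\Gamma_0$ with $w^2$ modulo a smoothing remainder, so with your $\tilde{h}$ the discrepancy $\textup{op}_M^{1/2}(w)=-t\partial_t$ is a first-order operator, not an element of $L^{-\infty}(B^\wedge,\g_{\gamma,\mu};\R^l)$. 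The appeal to "the change of variables $t=e^{-x}$" does not repair this: that substitution converts Mellin operators into Fourier operators \emph{in $x$}, whereas your starting operator is a Fourier quantization in $t$; converting $\textup{Op}_t(p)$ into a Mellin quantization changes the amplitude, and controlling that change up to $t=0$ is precisely the content of the theorem, so it cannot be invoked as a standard fact while keeping $\tilde{h}=V(\psi)\tilde{p}$. (Also, in the statement only $\lambda$ is rescaled, $h(t,t',w,\lambda)=\tilde{h}(t,t',w,t\lambda)$; there is no $\tilde{\tau}=t\tau$ substitution on the Mellin side to "track".)

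For comparison: the paper itself gives no proof but refers to \cite{Schu2}, \cite{Schu20}, \cite{Krai2}, and the argument there has an extra, essential, middle step which your proposal skips. One first rewrites $\textup{Op}_t(p)(\lambda)$ as a Mellin operator $\textup{op}^{1/2}_M(f)(\lambda)$ for a double amplitude $f(t,t',w,\lambda)$ obtained from $\tilde{p}$ by an oscillatory-integral substitution (essentially $t'=ts$ in the Schwartz kernel); it is exactly here that the corner-degenerate form $p(t,t',\tau,\lambda)=\tilde{p}(t,t',t\tau,t\lambda)$ is used to guarantee that $f$ is smooth up to $t,t'=0$ with values in $L^m(B,\g_{\gamma,\mu};\Gamma_0\times\R^l)$. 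Only then is the kernel cut-off $V(\psi)$ of Theorem \ref{2.conv} applied — to $f$, not to $\tilde{p}$ — producing the holomorphic $\tilde{h}$ with $\textup{op}_M^{1/2}(f)-\textup{op}_M^{1/2}(h)$ smoothing; the independence of $\delta$ then follows from holomorphy and the uniform estimates in Definition \ref{2.mehol} by a contour shift, as you say. Your first and last steps (kernel cut-off as the source of holomorphy; checking that remainders are smoothing in the corner sense and rapidly decreasing in $\lambda$) are the right ingredients, but without the change-of-quantization step the constructed $\tilde{h}$ is simply not a Mellin quantization of $p$, and the asserted congruence fails already for differential operators.
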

A proof may be found in \cite{Schu2}, see also \cite{Schu20}, or \cite{Krai2}. For references below we set
\begin{equation} \label{21.null}
p_0(t,\tau,\lambda):=\tilde{p}(0,0,t\tau,t\lambda),\, h_0(t,w,\lambda):=\tilde{h}(0,0,w,t\lambda).
\end{equation}
\begin{Def} \label{meel}
An element $h(w) \in M^\mu _R(B,\g_{\gamma,\mu})$ is called elliptic if there is a $\delta \in \R, \Gamma_\delta  \cap \R=\emptyset,$ such that  $h(\delta+i\tau)$ is elliptic in $L^\mu (B,\g_{\gamma,\mu};\R_\tau ).$ 
\end{Def}
The following theorem may be found in \cite{Mani2}.
\begin{Thm} \label{2.meinv}
For every elliptic $h(w) \in M^\mu _R(B,\g_{\gamma,\mu})$ there exists an $h^{-1}(w) \in M^{-\mu} _S(B,\g_{\gamma-\mu ,-\mu })$ for a certain $S$ such that $hh^{-1}=1$ and $h^{-1}h=1.$
\end{Thm}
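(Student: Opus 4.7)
The plan is to combine a parameter-dependent parametrix construction on the line of ellipticity, the Mellin kernel cut-off of Theorem \ref{2.conv}, and an analytic (meromorphic) Fredholm argument.

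First I would fix a weight line $\Gamma_\delta$ with $\Gamma_\delta \cap R = \emptyset$ on which $h(\delta+i\tau)$ is parameter-dependent elliptic in $L^\mu(B,\g_{\gamma,\mu};\R_\tau)$. The general theory of parameter-dependent parametrices in the edge calculus (together with the ellipticity-implies-Fredholm results of Section 1, in particular the symbolic inversion given by Theorem \ref{bje}) yields a parametrix $p(\tau) \in L^{-\mu}(B,\g_{\gamma-\mu,-\mu};\R_\tau)$ with $h(\delta+i\tau)\,p(\tau) - 1$ and $p(\tau)\,h(\delta+i\tau) - 1$ lying in $L^{-\infty}(B,\g_{0,0};\R_\tau)$. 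I would then apply the kernel cut-off operator $V(\psi)$ of Theorem \ref{2.conv}, for some $\psi \in \ci_0(\R_+)$ which equals $1$ near $s=1$, to obtain a holomorphic Mellin symbol $\tilde{p}(w) := V(\psi)p \in M^{-\mu}_{\mathcal{O}}(B,\g_{\gamma-\mu,-\mu})$ whose restriction to $\Gamma_\delta$ agrees with $p$ modulo $L^{-\infty}$.

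Next I would form the compositions $h(w)\tilde{p}(w) = 1 + r(w)$ and $\tilde{p}(w)h(w) = 1 + l(w)$. By the composition rule for the Mellin classes of Definition \ref{2.mehol} (applied pointwise on vertical lines via \eqref{11.compo}), the remainders $r$ and $l$ are meromorphic in $\C$ with pole set contained in $R$ and with smoothing values $r(w),l(w) \in L^{-\infty}(B,\g_{0,0};\R_\tau)$ on every line $\Gamma_\beta$ avoiding $R$, uniformly on compact $\beta$-intervals. In particular $1+r(w)$ is a holomorphic (away from $R$) family of compact perturbations of the identity on every fixed weighted edge Sobolev space, and on $\Gamma_\delta$ it is invertible at least for $|\tau|$ large by construction.

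Now I would invoke the analytic Fredholm theorem, which gives a meromorphic inverse $(1+r(w))^{-1} = 1 + s(w)$ with a discrete pole set $S \subset \C$ and finite-rank principal parts; the inverse belongs to $M^{-\infty}_{S}(B,\g_{0,0})$ once one verifies that $s$ depends holomorphically (away from $S$) on $w$ with values in the Fréchet space $L^{-\infty}(B,\g_{0,0};\R_\tau)$, uniformly on compact strips. This last verification is the main obstacle: one must show that the meromorphic extension remains in the parameter-dependent smoothing class on vertical lines (not merely pointwise smoothing), and that the Laurent coefficients at poles are of finite rank in each relevant topology. I would follow the pattern of \cite{Mani2}, combining the analytic Fredholm theorem applied in the Fréchet topology of $L^{-\infty}$ with the kernel cut-off estimates controlling the dependence on the imaginary part of $w$. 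Once this is done, setting $h^{-1}(w) := \tilde{p}(w)(1+s(w)) \in M^{-\mu}_{S'}(B,\g_{\gamma-\mu,-\mu})$, with $S'$ absorbing both $S$ and the image of $R$ under the composition, gives the required right inverse. The identity $h^{-1}h = 1$ follows by the symmetric argument starting from $l$, or from uniqueness: any meromorphic two-sided inverse candidates must coincide on a non-discrete set and hence everywhere.
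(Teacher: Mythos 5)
Your proposal is correct and follows essentially the same route as the paper, which gives no proof of its own but defers precisely to the kernel cut-off plus meromorphic (analytic Fredholm) inversion scheme of \cite{Mani2}: parametrix on an elliptic weight line, holomorphic Mellin quantisation via Theorem \ref{2.conv}, inversion of $1+r$ within $M^{-\infty}_S$, and you correctly isolate the genuine technical burden (that the meromorphic inverse stays in the parameter-dependent smoothing class uniformly on vertical lines with finite-rank Laurent data). Only a harmless notational slip: the remainders $h\tilde{p}-1$ and $\tilde{p}h-1$ are smoothing relative to the weight data $\g_{\gamma-\mu,0}$ and $\g_{\gamma,0}$, respectively, not $\g_{0,0}$, as dictated by the composition rule \eqref{11.compo}.
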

Similarly as $\eqref{11.kegel}$ we set
\begin{equation} \label{21.kegel}
\mathcal{K}^{s,\gamma,\theta}(B^\wedge ):= \{u=\omega u_0+(1-\omega )u_\infty :u_0\in\mathcal{H}^{s,\gamma,\theta}(B^\wedge ), u_\infty\in H^{s,\gamma}_{\textup{cone}}(B^\wedge)\}
\end{equation}
for any cut-off function $\omega(t).$ The spaces on the right hand side of $\eqref{21.kegel}$ are defined as follows. We use the fact that for every $\mu,\gamma\in\R$ there exists an elliptic family of operators 
\begin{equation} \label{21.red}
R^\mu(\lambda)\in L^\mu(B,{\bf{g}}_{\gamma,\mu};\R^l)
\end{equation}
which induces isomorphisms
\begin{equation} \label{21.iso}
R^\mu(\lambda): H^{s,\gamma}(B)\rightarrow H^{s-\mu,\gamma-\mu}(B)
\end{equation}
for all $s\in\R$ and $\lambda\in\R^l.$ The space $\mathcal{H}^{s,\gamma,\theta}(B^\wedge)$ is defined as the completion of $\ci_0(\R_+, H^{\infty ,\gamma}(B))$ with respect to the norm
\begin{equation} \label{21.weight}
\big\{\int_{\Gamma_{(\textup{dim}\,B+1)/2-\theta}}\|R^s(\textup{Im}\,w)M_{t \rightarrow w}f(w,\cdot)\|^2_{H^{0,\gamma-s}(B)}\dbar w\big\}^{1/2};
\end{equation}
where we employ $\eqref{21.red}$ for $l=1.$ Moreover, we set
$H^{s,\gamma}_{\textup{cone}}(B^\wedge):=H^{s,\gamma}_{\textup{cone}}(\R\times B)|_{B^\wedge}$ where $H^{s,\gamma}_{\textup{cone}}(\R\times B)$ is defined as the completion of $\ci_0(\R, \mathcal{W}^{\infty ,\gamma}(B))$ with respect to the norm
\begin{equation} \label{21.cone}\big\{\int\|\langle t\rangle^{-s+\textup{dim}\,B/2}\big(F^{-1}_{\tau\rightarrow t}R^s(\langle t\rangle\tau,\langle t\rangle\zeta)(F_{t\rightarrow\tau}u)\big)(t,\cdot)\|^2_{H^{0,\gamma-s}(B)}dt\big\}^{1/2},
\end{equation}
for any fixed $\zeta\in\R^d$ of sufficiently large absolute value; here we employ $\eqref{21.red}$ for $l=1+d.$ Note that the spaces $H^{s,\gamma}_{\textup{cone}}(B^\wedge)$ generalise $H^s_{\textup{cone}}(M^\wedge)$ mentioned in Section 1.1 for a smooth compact manifold $M.$ We do not focus our considerations here to what is also called manifolds with conical exits to infinity. For basics in the smooth case, see Shubin \cite{Shub1}, Parenti \cite{Pare1}, or Cordes \cite{Cord1}. In or case $B^\wedge$ is treated as a manifold with edge and conical exit to infinity where the cross section $B$ has edges, see also \cite{Calv1}.

\subsection{Parameter-dependent operators in corner spaces}
Pseudo-differential operators on a wedge $B^\Delta\times\Sigma$ for a manifold $B$ with edge and an open set $\Sigma\subseteq\R^d$ contain amplitude functions of the form $t^{-\mu}p(t,v,\tau,\zeta)$ for
$$p(t,v,\tau,\zeta):=\tilde{p}(t,v,t\tau,t\zeta),\quad   
\tilde{p}(t,v,\tilde{\tau},\tilde{\zeta})\in L^\mu(B,{\bf{g}}_{\gamma,\mu};\R^{1+d}_{\tilde{\tau},\tilde{\zeta}}),$$ for variables $(t,v)\in\R_+\times\Sigma.$ Together with associated Mellin amplitude functions $h(t,v,w,\zeta),$ cf. Theorem \ref{2.me}, those are involved in $L^\mu(B^\wedge,{\bf{g}}_{\gamma,\mu})$-valued edge symbols, namely,
\begin{equation} \label{2.edamp}
a(v,\zeta):= t^{-\mu}\epsilon\{\omega_\zeta\op^{\theta-\textup{dim}\,B/2}_M(h)(v,\zeta)\omega'_\zeta+\chi_\zeta \Op_t(p)(v,\zeta)\chi'_\zeta\}\epsilon'+(m+g)(v,\zeta)+b(v,\zeta);
\end{equation}
here $\epsilon, \epsilon', \omega, \omega'$ are cut-off functions and $\chi, \chi'$ excision functions in $t;$ moreover, $\omega_\zeta(t):=\omega(t[\zeta]),$ etc. 
The summand $b(v,\zeta)$ is localised far from $t=0,$ and  $(m+g)(v,\zeta)$ are smoothing Mellin plus Green contributions
of a similar structure as those in the edge calculus for smooth edges. We have
$$a(v,\zeta)\in S^\mu(\Sigma\times\R^d;\mathcal{K}^{s,\gamma,\theta}(B^\wedge),\mathcal{K}^{s-\mu,\gamma-\mu,\theta-\mu}(B^\wedge))$$ for every $s\in\R.$ Let us form the principal edge symbol
\begin{equation}  \label{2.edprinc}
\sigma_2(a)(v,\zeta):= t^{-\mu}\!\{\omega(t|\zeta|)\op^{\theta-\textup{dim}\,B/2}_M\!(h_0)(v,\zeta)\omega'(t|\zeta|)+\chi(t|\zeta|)\Op_t(p_0)(v,\zeta)\chi'(t|\zeta|)\!\}\!+\sigma_2(m+g)(v,\zeta),
\end{equation}\\
cf., $\eqref{21.null},$
\begin{equation} \label{2.edsy}
\sigma_2(a)(v,\zeta):\mathcal{K}^{s,\gamma,\theta}(B^\wedge)\rightarrow\mathcal{K}^{s-\mu,\gamma-\mu,\theta-\mu}(B^\wedge).
\end{equation}
Similarly as in first order principal symbols we have homogeneity in the sense
\begin{equation} \label{2.khom}
\sigma_2(a)(v,\lambda \zeta)=\lambda ^\mu \kappa _\lambda \sigma_2(a)(v, \zeta) \kappa _\lambda^{-1}
\end{equation}
for all $\lambda \in \R_+.$ Here $(\kappa _\lambda u)(t)=\lambda ^{(\textup{dim}\,B+1)/2}u(\lambda t)$ for $u(t)\in \mathcal{K}^{s,\gamma,\theta}(B^\wedge).$
As noted at the beginning the ellipticity of the 
operator $\Op(a)$ contains the condition that $\eqref{2.edsy}$ is a family
of isomorphisms for all $(v,\zeta)\in T^*\Sigma\setminus 0.$ Similarly as in the calculus for first order
edges the main point is the Fredholm property rather than the bijectivity. Recall that then there are two ways to pass to isomorphisms, provided that the corresponding analogue of the Atiyah-Bott condition is satisfied. We can fill up the Fredholm family by finite rank operators (of trace and potential type) to a $2\times 2$ block matrix family of isomorphisms, or we can add a smoothing Mellin plus Green operator family to get isomorphisms of the type of an upper left corner, cf., analogously, Theorem \ref{bje}. The second way is more convenient when we intend to explain the structure of operators in principle. This is our viewpoint here; so we have the second way in mind.
In this article we do not exhaust the consequences anyway; we are interested in a construction that produces the Fredholm property of $\eqref{2.edsy}.$ \\The operators $\eqref{2.edamp}$ belong the the corner calculus on the (open stretched) corner $\R_+\times B$ for every $(v,\zeta)\in \Sigma\times \R^d.$ The same is true of $\eqref{2.edprinc}$ for every $(v,\zeta)\in \Sigma\times \R^d\setminus \{0\}.$
Thus there is a subordinate ($\zeta$-independent) principal conormal symbol $\sigma _1(\sigma _1(a)(v,\zeta))=: f(v,z)\in \ci(\Sigma,M^\mu _{\mathcal{O}}(B,\g_{\gamma,\mu})),$ cf. Definition \ref{2.mehol} (i), with the complex covariable $z.$ Similarly as Theorem \ref{1.prep} the following result will be formulated, for simplicity, for the flat part of the edge calculus of second singularity order. The amplitude functions admit the definition of the symbols $\sigma _0$ and 
$\sigma_1, $ including their reduced variants. To be more precise, $\sigma _0(a)$ is a function on $T^*(\R_+\times s_0(B)\times \Sigma)\setminus 0$ which is of the form $\sigma _0(a)(t,b,v,\tau ,\beta ,\zeta )=t^{-\mu }\tilde{\sigma }_0(a)(t,b,v,t\tau ,\beta ,t\zeta )$ for some function $\tilde{\sigma }_0(a)(t,b,v,\tilde{\tau },\beta , \tilde{\zeta  })$ which is smooth up to $t=0.$ Here $(b,\beta )$ denote points of the cotangent bundle of $s_0(B).$ The latter dissolve locally close to $s_1(B)$ into $(r,x,y,\rho, \xi, \eta), $ and in those variables we have $\tilde{\sigma }_0(a)(t,r,x,y,v,\tilde{\tau}, \rho, \xi, \eta, \tilde{\zeta}) =r^{-\mu }\tilde{\tilde{\sigma }}_0(a)(t,r,x,y,v,r\tilde{\tau},r\rho,\xi,r\eta,r \tilde{\zeta})$ for a function $\tilde{\tilde{\sigma }}_0(a)(t,r,x,y,v,\tilde{\tilde{\tau}}, \tilde{\rho}, \xi, \tilde{\eta}, \tilde{\tilde{\zeta}})$ smooth up to $t=0,r=0.$ 
Moreover, close to $s_1(\R_+\times B \times \Sigma)=\R_+\times s_1(B)\times \Sigma$ in local splittings of variables $(t,r,x,y,v)$ we have the edge symbol  $\sigma_1(a)(t,y,v,\tau, \eta, \zeta )=t^{-\mu }\tilde{\sigma}_1(a)(t,y,v,t\tau, \eta, t\zeta):\mathcal{K}^{s ,\gamma }(X^\wedge)\rightarrow \mathcal{K}^{s-\mu ,\gamma -\mu }(X^\wedge) $ for the reduced symbol $\tilde{\sigma}_1(a)(t,y,v,\tilde{\tau}, \eta, \tilde{\zeta})$ which is smooth up to $t=0.$ In the ellipticity of operators over $\R_+\times B \times \Sigma,$ concerning the full corner calculus to construct parametrices (which is not the focus of the present paper) the assumption is $\sigma _j$-ellipticity which means non-vanishing of $\sigma_0(a)$ on $T^*(\R_+\times s_0(B) \times \Sigma)\setminus 0,$ and of $\tilde{\sigma}_0(a)$ up to $r=0, t=0,$ and $\tilde{\tilde{\sigma}}_0(a) $ up to $t=0,$ and the bijectivity of $\sigma_1(a) $ on $T^*(\R_+\times s_1(B) \times \Sigma)\setminus 0,$ and of $\tilde{\sigma}_1(a) $ up to $t=0.$
Here we need those symbols close to $t=0$ for the Fredholm property of $\eqref{2.edsy};$ so we may ignore  the properties for $t>\varepsilon $ for some $\varepsilon >0.$ In that sense we understand the conditions of the following theorem.
\begin{Thm} \label{2.prep}
Let $a(v,\zeta)$ be $\sigma _j$-elliptic close to $t=0$ for $j=0,1.$ Then for every $v\in \Sigma$ there is a discrete set $D(v)\subset \C$ intersecting $\{z\in \C:c\leq \textup{Re}\,z\leq c'\}$ in a finite set for every $c\leq c'$ such that
\begin{equation} \label{2.bijsob}
f(v,z):H^{s,\gamma }(B)\rightarrow H^{s-\mu ,\gamma -\mu }(B)
\end{equation}
are isomorphisms for all $z\in \C\setminus D(v)$ and all $s\in \R.$
\end{Thm}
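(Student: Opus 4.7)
The plan is to recognise $f(v,\cdot)$ as a holomorphic Mellin symbol in $M^\mu_{\mathcal{O}}(B,\g_{\gamma,\mu})$, verify that it is elliptic in the sense of Definition \ref{meel}, and then invoke the meromorphic inverse Theorem \ref{2.meinv} on the edge calculus over $B$ to produce the discrete set $D(v)$ together with the inverse family.

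First, I would make explicit how $f(v,z)$ emerges from the corner-degenerate amplitude $a(v,\zeta)$. Applying Theorem \ref{2.me} to the interior symbol $\tilde p$ appearing in \eqref{2.edamp} produces a Mellin symbol $\tilde h(t,v,w,\tilde\zeta)\in \ci(\overline{\R}_+\times \Sigma,M^\mu_{\mathcal{O}}(B,\g_{\gamma,\mu};\R^d_{\tilde\zeta}))$. The principal conormal symbol of $\sigma_2(a)$ is then the $\zeta$-independent quantity $f(v,z)=\tilde h(0,v,z,0)$, and hence $f(v,\cdot)\in \ci(\Sigma,M^\mu_{\mathcal{O}}(B,\g_{\gamma,\mu}))$ is holomorphic in $z$, as already asserted in the paragraph preceding the theorem.

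The decisive step is to transfer the $\sigma_0$- and $\sigma_1$-ellipticity of $a(v,\zeta)$ near $t=0$ into parameter-dependent ellipticity of the restriction $f(v,\delta+i\tau)$ in $L^\mu(B,\g_{\gamma,\mu};\R_\tau)$ for some (equivalently, any) weight line $\Gamma_\delta$. By the characterisation of ellipticity in the edge calculus on $B$ recalled in subsection 1.4, this reduces to checking (i) non-vanishing of the interior symbol of $f(v,\delta+i\tau)$ together with its reduced variants up to $r=0$, and (ii) bijectivity of its principal edge symbol $\mathcal{K}^{s,\gamma}(X^\wedge)\to \mathcal{K}^{s-\mu,\gamma-\mu}(X^\wedge)$ for $(\eta,\tau)\ne 0$. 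Both follow by setting $t=0$ and $\tilde{\tilde\zeta}=0$ in the reduced symbols $\tilde{\tilde\sigma}_0(a)$ and $\tilde\sigma_1(a)$ of $a$: the hypothesis provides the required non-vanishing, respectively bijectivity, for arbitrary non-zero $(\tilde\tau,\tilde\rho,\xi,\tilde\eta)$ and $(\tilde\tau,\tilde\eta)$, with $\tilde\tau=\tau$ playing the role of the Mellin parameter on $\Gamma_\delta$. Hence $f(v,\cdot)$ is elliptic in the sense of Definition \ref{meel}.

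Theorem \ref{2.meinv} applied to $f(v,\cdot)$ then furnishes an inverse $f^{-1}(v,\cdot)\in M^{-\mu}_{D(v)}(B,\g_{\gamma-\mu,-\mu})$ for some set $D(v)\subset \C$ which, by Definition \ref{2.mehol} (ii), meets each vertical strip $\{c\le \textup{Re}\,z\le c'\}$ in only finitely many points. Off $D(v)$ the identities $ff^{-1}=f^{-1}f=1$ combined with the continuity \eqref{11.cont} deliver the claimed isomorphisms \eqref{2.bijsob} for all $s\in \R$. The main technical obstacle is the transfer step above: one must carefully track the rescalings $t\tau$ and $t\zeta$ through the corner Mellin quantisation of Theorem \ref{2.me}, identify the symbolic components of $\tilde h$ at $t=0$ with those of $a$ near $t=0$, and verify that the ellipticity hypothesis survives the specialisation $\tilde{\tilde\zeta}=0$, so that $\tau=\textup{Im}\,z$ alone carries the full parameter-dependent ellipticity on the line $\Gamma_\delta$.
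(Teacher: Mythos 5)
Your argument is correct, but it concludes by a genuinely different mechanism than the paper. You establish that the conormal symbol $f(v,\cdot)\in M^\mu_{\mathcal{O}}(B,\g_{\gamma,\mu})$ is elliptic in the sense of Definition \ref{meel} (via the specialisation $t=0$, $\tilde{\tilde\zeta}=0$ of the reduced symbols) and then invoke the Maniccia--Schulze inversion Theorem \ref{2.meinv}, taking $D(v)$ to be the Mellin asymptotic type $S$ of the meromorphic inverse, whose finite intersection with every strip is built into Definition \ref{2.mehol} (ii). The paper instead argues directly: pointwise ellipticity of $f(z)$ in $L^\mu(B,\g_{\gamma,\mu})$ gives the Fredholm property of \eqref{2.bijsob} for every $z$, parameter-dependent ellipticity of $f|_{\Gamma_\beta}$ (uniform in compact $\beta$-intervals) gives invertibility for $c\leq \textup{Re}\,z\leq c'$, $|\textup{Im}\,z|\geq H$, and then the analytic Fredholm theorem for the holomorphic family yields isomorphisms off a discrete set. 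So the paper's route is more elementary and self-contained, producing $D(v)$ and the strip-finiteness directly from the large-parameter invertibility plus analytic Fredholm theory, whereas your route leans on the heavier Theorem \ref{2.meinv} (whose proof rests on essentially the same machinery) but buys additional structure: the pointwise inverses assemble into a meromorphic Mellin symbol in $M^{-\mu}_S(B,\g_{\gamma-\mu,-\mu})$, exactly the form needed later for conormal symbols of parametrices. Note that the decisive transfer step -- from $\sigma_j$-ellipticity of $a$ near $t=0$ to (parameter-dependent) ellipticity of $f$ on weight lines -- is asserted rather than proved in both your proposal and the paper, so no gap arises relative to the paper's own level of detail; also note the paper additionally reduces to $v$-independent $f$, a simplification your argument does not need since you work pointwise in $v$ throughout.
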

\begin{proof}
We may assume that $f$ is independent of $v.$ The assumptions on the symbols imply the ellipticity of $f(z)$ in $L^\mu(B,{\bf{g}}_{\gamma ,\mu }) $ for every $z\in \C$ and hence the Fredholm property of $\eqref{2.bijsob}.$ In addition $f|_{\Gamma _\beta }$ is elliptic in the parameter-dependent class $L^\mu(B,{\bf{g}}_{\gamma ,\mu };\Gamma _\beta )$ for every $\beta \in \R,$ uniformly in compact $\beta $-intervals. Thus for every $c\leq c'$ there is a $H>0$ such that the operators $\eqref{2.bijsob}$ define isomorphisms for all $\{z\in \C:c\leq \textup{Re}\,z \leq c',|\textup{Im}\,z|\geq H\}.$ Finally, the Fredholm family $\eqref{2.bijsob}$ is holomorphic in $z\in \C.$ Thus, according to a well-known theorem on holomorphic Fredholm functions that are isomorphic for at least one point of the respective open set in the complex plane, the operators $\eqref{2.bijsob}$ are isomorphisms off a discrete set $D.$
\end{proof}
The following theorem belongs to the reasons why 
Theorem \ref{2ell} below is of interest in the corner calculus of second singularity order. However, it is not the main point of the present investigation. Therefore we only sketch the arguments of the proof; the full details are (unfortunately) voluminous. They will be presented in another paper.
\begin{Thm} \label{2.prop}
Let $a(v,\zeta)$ be $\sigma _j$-elliptic close to $t=0$ for $j=0,1.$ Then for every $v\in \Sigma$ the operators $\eqref{2.edsy}$ are Fredholm for those $\theta \in\R$ where $\eqref{2.bijsob}$ are isomorphisms for all $z\in \Gamma _{(\textup{dim}\,B+1)/2-\theta}; $ this holds for all $s\in \R.$
\end{Thm}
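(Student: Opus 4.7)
The plan is to verify the Fredholm property of \eqref{2.edsy} by constructing, for fixed $v\in\Sigma$, a parametrix $P(v,\zeta)$ to $\sigma_2(a)(v,\zeta)$ whose compositions with $\sigma_2(a)(v,\zeta)$ differ from the identity by operators that are compact on the weighted corner space $\mathcal{K}^{s,\gamma,\theta}(B^\wedge)$. Since the statement is pointwise in $v$, I would fix $v$ and view $\sigma_2(a)(\zeta)$, in the explicit form \eqref{2.edprinc}, as a $\zeta$-dependent element of the cone algebra on the stretched cone $B^\wedge$ with cross-section $B$, which is itself a compact manifold with edge. In this subordinate calculus the relevant principal symbols are the interior symbol, the principal edge symbol along $s_1(B)^\wedge$, and the conormal symbol $f(v,\cdot)$ at the tip $t=0$. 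The assumed $\sigma_0$- and $\sigma_1$-ellipticity close to $t=0$ entails the corresponding ellipticity of the interior and edge symbols of $\sigma_2(a)(\zeta)$ and the non-vanishing of their reduced variants up to $t=0,r=0$, so that $\sigma_2(a)(\zeta)$ is a fully $(\sigma_0,\sigma_1)$-elliptic edge-degenerate operator on $B^\wedge$ in the sense of the first-order singularity calculus.

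Next I would construct $P(v,\zeta)$ in two stages. First, using this $(\sigma_0,\sigma_1)$-ellipticity, the standard parameter-dependent edge parametrix construction of Section 1 (in particular Theorem \ref{1.prep} applied to the cross-section $B$, together with the Mellin quantisation of Theorem \ref{2.me}) yields an interior-edge parametrix modulo a smoothing Mellin plus Green residue concentrated near $t=0$. Second, to correct this residue at $t=0$ I would invoke the hypothesis that $f(v,w)$ is an isomorphism on the entire weight line $\Gamma_{(\textup{dim}\,B+1)/2-\theta}$. By Theorem \ref{2.prep}, $f(v,w)\in M^\mu_{\mathcal{O}}(B,{\bf g}_{\gamma,\mu})$ is elliptic in the sense of Definition \ref{meel} and gives rise to a holomorphic Fredholm family that is bijective off a discrete set in $\C$. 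Theorem \ref{2.meinv} then furnishes an inverse $f^{-1}(w)\in M^{-\mu}_S(B,{\bf g}_{\gamma-\mu,-\mu})$ for some asymptotic type $S$ that, by the hypothesis, does not meet $\Gamma_{(\textup{dim}\,B+1)/2-\theta}$. Quantising $f^{-1}$ by $\op_M^{\theta-\textup{dim}\,B/2}$ with cut-off factors $\omega_\zeta,\omega'_\zeta$ of the type appearing in \eqref{2.edamp}, and compensating the finitely many poles of $f^{-1}$ in the weight strip by a Green correction, produces a local Mellin parametrix at $t=0$ that annihilates the conormal part of the residue modulo a smoothing Mellin plus Green operator on $\mathcal{K}^{s,\gamma,\theta}(B^\wedge)$.

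Gluing the two pieces by a partition of unity in $t$ yields $P(v,\zeta)$ such that both $P(v,\zeta)\sigma_2(a)(v,\zeta)-1$ and $\sigma_2(a)(v,\zeta)P(v,\zeta)-1$ are smoothing operators of negative order, with asymptotic type control at $t=0$ and exit decay at $t=\infty$ inherited from the $H^{s,\gamma}_{\textup{cone}}$ component of $\mathcal{K}^{s,\gamma,\theta}(B^\wedge)$. The Fredholm property of \eqref{2.edsy} then follows by the standard parametrix argument. The principal obstacle in this plan is the verification that these remainders are in fact compact on $\mathcal{K}^{s,\gamma,\theta}(B^\wedge)$: the smoothing and asymptotic improvements at $t=0$ must be combined with the polynomial decay at $t=\infty$ coming from $H^{s,\gamma}_{\textup{cone}}(B^\wedge)$ and controlled uniformly with respect to the group action $(\kappa_\lambda u)(t,\cdot)=\lambda^{(\textup{dim}\,B+1)/2}u(\lambda t,\cdot)$, which interacts nontrivially with the pre-existing edge structure of the cross-section $B$. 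This is precisely the point where, as the authors remark, the full technical details become voluminous and are deferred to a separate paper.
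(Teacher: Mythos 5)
Your proposal follows essentially the same route as the paper's (sketched) proof: a parametrix for $\sigma_2(a)(v,\zeta)$ built from the $(\sigma_0,\sigma_1)$-ellipticity, with the inverse of the conormal symbol $f$ on $\Gamma_{(\textup{dim}\,B+1)/2-\theta}$ incorporated so that the remainders gain also in the corner weight $\theta$, and Fredholmness concluded from compactness of those remainders. The one ingredient the paper states explicitly and you only flag as the ``principal obstacle'' is the mechanism for that compactness, namely the embeddings $\langle t\rangle^{-g'}\mathcal{K}^{s',\gamma',\theta'}(B^\wedge)\hookrightarrow\langle t\rangle^{-g}\mathcal{K}^{s,\gamma,\theta}(B^\wedge)$, cf. \eqref{2.emb}, which are compact precisely when $s,\gamma,\theta,g$ are all strictly improved --- exactly the gain the remainders achieve.
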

\begin{proof}
The assertion is an analogue of a corresponding result in the edge calculus of first singularity order. The details of the proof depend on a number of functional analytic properties of the spaces $\mathcal{H}^{s,\gamma,\theta}(B^\wedge )$ and $H^{s,\gamma}_{\textup{cone}}(B^\wedge) $ that constitute the cone spaces $\mathcal{K}^{s,\gamma,\theta}(B^\wedge),$ cf. the expression $\eqref{21.kegel},$ combined with the specific degenerate nature of the order reducing families that are involved in the definition. Those imply continuous embeddings
\begin{equation} \label{2.emb}
\langle t\rangle^{-g'}\mathcal{K}^{s',\gamma',\theta'}(B^\wedge)\hookrightarrow \langle t\rangle^{-g}\mathcal{K}^{s,\gamma,\theta}(B^\wedge)
\end{equation}
for $s'\geq s,\gamma'\geq  \gamma ,\theta' \geq \theta ,g'\geq g$ which are compact for $s'>s,\gamma' > \gamma ,\theta' >\theta ,g'>g.$ For simplicity we consider again the $v$-independent case. The conditions concerning $\sigma _0(a),\sigma _1(a)$ allow us to construct a parametrix $a^{(-1)}(\zeta )$ of analogous structure as $a(\zeta )$ in the sense that $\sigma _0(a^{(-1)})=\sigma _0^{-1}(a),\sigma _1(a^{(-1)})=\sigma _1^{-1}(a).$ Then $\sigma _2(a^{(-1)})(\zeta )$ is a family of continuous operators
\begin{equation} \label{2.invv}
\sigma _2(a^{(-1)})(\zeta ): \mathcal{K}^{s-\mu,\gamma-\mu,\theta-\mu}(B^\wedge)\rightarrow \mathcal{K}^{s,\gamma,\theta}(B^\wedge)
\end{equation}
such that
\begin{equation} \label{2.th1}
\sigma _2(a^{(-1)})(\zeta )\sigma _2(a)(\zeta )-1:\langle t\rangle^{-g}\mathcal{K}^{s,\gamma,\theta}(B^\wedge)\rightarrow \langle t\rangle^{-g'}\mathcal{K}^{s',\gamma',\theta}(B^\wedge)
\end{equation}
and
\begin{equation} \label{2.th2}
\sigma _2(a)(\zeta )\sigma _2(a^{(-1)})(\zeta )-1: \langle t\rangle^{-g}\mathcal{K}^{s-\mu,\gamma-\mu,\theta-\mu}(B^\wedge)\rightarrow \langle t\rangle^{-g'}\mathcal{K}^{s'-\mu,\gamma'-\mu,\theta-\mu}(B^\wedge) 
\end{equation}
for every $s,g$ and the given $\gamma, \theta $ for some $s'>s,g'>g$ and $\gamma'> \gamma .$ Now if the conormal symbol $f(z)$ consists of isomorphisms for all $z\in \Gamma _{(\textup{dim}\,B+1)/2-\theta},$ then $f^{-1}(z)$ can be integrated in the process of constructing $a^{(-1)}(\zeta ).$ Then, similarly as in the edge calculus of first singularity order we obtain $a^{(-1)}(\zeta )$ in such a way that $\theta $ on the right of $\eqref{2.th1}$ and $\eqref{2.th2}$ may be replaced by $\theta '.$ Applying the compact embeddings $\eqref{2.emb}$ we then obtain the claimed Fredholm property.
\end{proof}
\subsection{Ellipticity of corner conormal symbols for a prescribed weight}
Let us now consider an amplitude function $\eqref{2.edamp}$ from the edge calculus (of second singularity order) with the associated principal edge symbol $\eqref{2.edprinc}.$ Assuming that   $\eqref{2.edamp}$ is $(\sigma_0, \sigma_1)$-elliptic we construct a smoothing Mellin amplitude function $m_\theta(v,\zeta)$ such that $a(v,\zeta)+m_\theta(v,\zeta)$ is $(\sigma_0, \sigma_1,\sigma_2)$-elliptic.
\begin{Thm} \label{2ell}
Let $h\in M^\mu_\mathcal{O}(B,{\bf{g}}_{\gamma,\mu})$ such that $h|_{\Gamma_\beta}\in L^\mu(B,{\bf{g}}_{\gamma,\mu};\Gamma_\beta)$ is parameter-dependent elliptic for some real $\beta.$ Then for every fixed $\theta\in\R$ there exists an $f\in M^{-\infty}_\mathcal{O}(B,{\bf{g}}_{\gamma,\mu})$ such that 
\begin{equation} \label{2iso}
(h-f)(z): H^{s,\gamma}(B)\rightarrow H^{s-\mu,\gamma-\mu}(B) 
\end{equation}
is a family of isomorphisms for all $w\in\Gamma_{(\textup{dim}\,B+1)/2-\theta}$ and every $s\in\R.$
\end{Thm}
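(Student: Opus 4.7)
The plan is to combine Theorem~\ref{2.meinv} with a finite-rank perturbation at each non-invertibility point on the target weight line, and to assemble those corrections into a single holomorphic Mellin symbol $f$ via Mellin--Paley--Wiener interpolating scalar factors. Write $\delta_0:=(\textup{dim}\,B+1)/2-\theta$.

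First I would reduce to a Fredholm statement on the single line $\Gamma_{\delta_0}$. By Definition~\ref{meel} the hypothesis says $h$ is elliptic, so Theorem~\ref{2.meinv} yields a meromorphic inverse $h^{-1}\in M^{-\mu}_S(B,\g_{\gamma-\mu,-\mu})$ with $S\subset\C$ discrete. In particular
$$
h(z):H^{s,\gamma}(B)\to H^{s-\mu,\gamma-\mu}(B)
$$
is a holomorphic Fredholm family of index zero, bijective on $\C\setminus S$. Since the parameter-dependent principal symbol of $h|_{\Gamma_\delta}$ (both $\sigma_0$ and $\sigma_1$ components) is $\delta$-independent by holomorphy of $h$, parameter-dependent ellipticity on $\Gamma_\beta$ propagates to every line $\Gamma_\delta$; hence $h|_{\Gamma_{\delta_0}}$ is invertible outside a compact set, and $D:=S\cap\Gamma_{\delta_0}=\{z_1,\dots,z_N\}$ is finite, with $h(z)$ an isomorphism on $\Gamma_{\delta_0}\setminus D$.

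At each $z_j\in D$ the operator $h(z_j)$ has finite-dimensional kernel and cokernel of common dimension $d_j$, consisting of smooth elements by elliptic regularity. I choose $K_j\in L^{-\infty}(B,\g_{\gamma,\mu})$ of rank $d_j$, built from $\ci$ bases of $\ker h(z_j)$ and of a complement of $\textup{im}\,h(z_j)$, so that $h(z_j)+K_j$ is an isomorphism for every $s\in\R$. To glue the $K_j$ into a single holomorphic Mellin symbol I construct scalar entire factors $\phi_j\in\mathcal{A}(\C)$ with $\phi_j(z_k)=\delta_{jk}$, Schwartz on every vertical line uniformly on compact real-part intervals, and arbitrarily concentrated around $z_j$ on $\Gamma_{\delta_0}$. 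Such factors can be produced from Mellin transforms of sharply localised bumps in $\ci_0(\R_+)$ (whose dilation governs the concentration), with a finite Hermite-type correction enforcing the interpolation without destroying the Schwartz decay, in the spirit of the kernel cut-off of Theorem~\ref{2.conv}. Setting
$$
f(z):=-\sum_{j=1}^{N}\phi_j(z)K_j,
$$
we obtain $f\in M^{-\infty}_{\mathcal{O}}(B,\g_{\gamma,\mu})$, and $(h-f)(z_j)=h(z_j)+K_j$ is bijective for each $j$.

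Finally, away from $D$ on $\Gamma_{\delta_0}$ the inverse $h(z)^{-1}$ is locally uniformly bounded, so choosing the concentration scale of the $\phi_j$ small enough turns $f(z)$ into a uniformly small $L^{-\infty}$-valued perturbation of $h(z)$ outside fixed small discs around the $z_j$; hence $(h-f)(z)$ stays invertible there. Inside each disc a local holomorphic Fredholm argument with index zero gives invertibility in a full neighbourhood of each $z_j$. The main obstacle is the third step: producing the scalar factors $\phi_j$ in the scalar analogue of $M^{-\infty}_{\mathcal{O}}$ with simultaneous interpolation, Schwartz behaviour on every vertical line, and concentration sharp enough on $\Gamma_{\delta_0}$ to dominate $\|h(z)^{-1}\|$ uniformly off $D$; once those scalar factors are in hand, linearity of the rest of the construction and the standard holomorphic Fredholm alternative conclude the proof.
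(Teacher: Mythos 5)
Your overall skeleton (finite\--rank smoothing corrections at the finitely many bad points on the weight line, assembled into a holomorphic symbol, plus a smallness argument off those points) is in the spirit of the paper, but the way you glue near the bad points has a genuine gap, and it is not the step you flag as the main obstacle. Granting ideal entire factors $\phi_j$ (these can indeed be produced, essentially by Mellin Paley--Wiener / kernel cut-off as in Theorem \ref{2.conv}), you still have to show that $h(w)+\phi_j(w)K_j$ is invertible for \emph{all} $w\in\Gamma_{\delta_0}$ near $z_j$, not only at $z_j$ and far away. Your two mechanisms do not meet: the continuity argument at $z_j$ works only on a neighbourhood whose radius is of the order of the concentration scale of $\phi_j$ (since the Lipschitz constant of $\phi_j$ grows as you concentrate), while the Neumann-series argument needs $|\phi_j(w)|\,\|K_j\|\,\|h(w)^{-1}\|<1$, which fails precisely at the edge of that neighbourhood, where $|\phi_j|$ is still of order one and $\|h(w)^{-1}\|$ is large; sharpening the concentration shrinks the first region at exactly the rate at which it is needed to extend the second, so no choice of scale closes the intermediate zone. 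The appeal to a ``local holomorphic Fredholm argument'' does not repair this: holomorphy plus invertibility at $z_j$ only yields invertibility off a discrete set, and nothing prevents that discrete set from meeting $\Gamma_{\delta_0}$ arbitrarily close to $z_j$ --- which is exactly the phenomenon the theorem is meant to defeat. Note also that $K_j$, chosen to fix $h$ at $z_j$, may destroy invertibility at points of the line where $h$ alone was invertible; your proposal contains nothing that excludes this.

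The paper circumvents precisely this difficulty by a two-step procedure. First it builds a \emph{non-holomorphic} correction $f_1(w)=\delta(w)kc^{-1}t$ with $\delta\in C_0^\infty(I)$ supported in a small interval $I\ni p$ on the line: on $I$ the invertibility of $h(w)-\delta(w)kc^{-1}t$ is obtained structurally, as the Schur complement of an invertible $2\times 2$ block matrix $\begin{pmatrix} h(w) & \delta(w)k\\ t & c\end{pmatrix}$ in which the same cut-off multiplies $k$, and outside $I$ the correction vanishes identically, so nothing has to be dominated there. Only then is holomorphy restored, by applying the Mellin kernel cut-off $V(\psi_\varepsilon)$ of Theorem \ref{2.conv} to $f_1$ and proving that invertibility survives for small $\varepsilon$: on a compact interval $K\supset\mathrm{supp}\,f_1$ by uniform convergence $f_{(\varepsilon)}\to f_1$, and on $\Gamma_0\setminus K$ by writing $h-f_{(\varepsilon)}=h\,(1-h^{-1}f_{(\varepsilon)})$ with $h^{-1}\in M^{-\mu}_S$ from Theorem \ref{2.meinv} and using the Schwartz decay of $f_{(\varepsilon)}$ away from $\mathrm{supp}\,f_1$. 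If you want to salvage your one-step approach you would have to supply an argument for the intermediate annuli around the $z_j$; as written, the proof does not go through.
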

\begin{proof}
The above-mentioned discrete set $D\in\C$ intersects the weight line $\Gamma_{(n+1)/2-\theta}$ in at most finitely many points $\{p_1,\ldots,p_N\}$ such that $\eqref{2iso}$
is invertible for all $w\in \Gamma_{(n+1)/2-\theta}\setminus\{p_1,\ldots,p_N\}.$ Without loss of generality we assume $\theta=(\textup{dim}\,B+1)/2$ since a translation parallel to the real axis allows us to change $\theta.$ Moreover, let $N=1, p:=p_1;$ the straightforward extension of the arguments to the case of arbitrary $N$ is left to the reader. Then $\eqref{2iso}$ is invertible for all $w\in\Gamma_0\setminus\{p\}$ for some $p\in\Gamma_0.$ Since $\eqref{2iso}$ is a parameter-dependent elliptic family of operators in the edge algebra over $B,$ Fredholm and of index $0,$ there are finite-dimensional subspaces $V\subset H^{\infty,\gamma}(B),\,W\subset H^{\infty,\gamma-\mu}(B), \textup{dim}\,V=\textup{dim}\,W=:d,$ such that $V=\textup{ker}\,h(p)\subset H^{\infty,\gamma}(B),$ and $W+\textup{im}\,h(p)=H^{s-\mu,\gamma-\mu}(B)$ for every $s\in\R.$ Then for any isomorphism $k:\C^d\rightarrow W$ the row matrix

$$\begin{pmatrix}
 h & k   \\
					       
\end{pmatrix} : \Hsum{H^{s,\gamma}(B)}  
                              {\C^d}         \to
			 H^{s-\mu,\gamma-\mu}(B)$$
is surjective for all $s,$ and we have $\textup{ker}\,(h\quad k)=\textup{ker}\,h \oplus\{0\}=V \oplus\{0\}.$
Let $t_0:V\rightarrow\C^d$ be an isomorphism, and define a continuous operator $t:H^{0,\gamma}(B)\rightarrow\C^d$ by composing $t_0$ with the orthogonal projection $H^{0,\gamma}(B)\rightarrow V.$ Then $t$ extends (or restricts) to a continuous mapping $t:H^{s,\gamma}(B)\rightarrow\C^d,$ and
$$ \begin{pmatrix} h & k \cr
            t & 0 \cr \end{pmatrix}:\Hsum{H^{s,\gamma}(B)}  
                              {\C^d}         \to
			\Hsum{H^{s-\mu,\gamma-\mu}(B)}    
                             {\C^d}$$
is an isomorphism. By using the fact that that linear isomorphisms form an open set in the space of linear continuous operators, for fixed $s=s_1$ there is a $c_1>0$ such that 

\begin{equation} \label{2mat} \begin{pmatrix} h & k \cr
                t & c \cr \end{pmatrix}:\Hsum{H^{s,\gamma}(B)}  
                              {\C^d}         \to
			\Hsum{H^{s-\mu,\gamma-\mu}(B)}    
                             {\C^d}
\end{equation} 
is an isomorphism for $s=s_1,\,c:=c_1\textup{id}_{\C^d}.$ This implies that ($\ref{2mat}$) is an isomorphism for all $s\in\R$ since such a block matrix operator is an isomorphism if and only if the first row is surjective, and the second row maps the kernel of the first one isomorphically to $\C^d.$ However, $\textup{ker}\,(h\quad k)=\textup{ker}\,h=V$ is independent of $s$; therefore, the criterion is fullfilled for all $s.$ From ($\ref{2mat}$) we now produce an invertible $2\times 2$ matrix

$$\begin{pmatrix} h-kc^{-1}t & 0 \cr
                0 & c \cr \end{pmatrix}=\begin{pmatrix} 1 & -kc^{-1} \cr
                0 & 1 \cr \end{pmatrix} \begin{pmatrix}  h & k \cr
                t & c \cr \end{pmatrix} \begin{pmatrix}  1 & 0 \cr
                -c^{-1}t & 1 \cr \end{pmatrix},$$
with $1$ denoting the identity operator in $\C^d.$ Since all factors are invertible, also $$h-kc^{-1}t:H^{s,\gamma}(B)\rightarrow H^{s-\mu,\gamma-\mu}(B)
$$ is invertible. Observe that when we replace the operator $k$ in ($\ref{2mat}$) by $\delta k$ for any $\delta >0$ then the kernel of the modified first  row is isomorphically mapped by the second row to $\C^d,$ no matter how large $\delta$ is. Thus the whole construction can be repeated with $\delta k$ instead of $k$ but the same second row $(t\quad c).$\\
Let us now choose a function $\delta(w)\in\ci_0(I)$ for $I=\{w\in\Gamma_0:|w-p|<b\}$ for some $b>0,$ where $\delta(p) \neq 0.$ Then, setting

$$h_\delta(w):=\begin{pmatrix}
 h(w) & \delta(w)k   \\
					       
\end{pmatrix} : \Hsum{H^{s,\gamma}(B)}  
                              {\C^d}         \to
			 H^{s-\mu,\gamma-\mu}(B),$$   
for every $\varepsilon >0$ there are $b>0$ and $\delta\in\ci_0(I), \delta(p) \neq 0,$ with $\|h_\delta(w)-h_\delta(p)\|_{\mathcal{L}(H^{s,\gamma}(B)\oplus \C^d,H^{s-\mu,\gamma-\mu}(B))}<\varepsilon$ for all $w\in I.$\\ In fact, for sufficiently small $b$ we have $\|h(p)-h(w)\|_{\mathcal{L}(H^{s,\gamma}(B),H^{s-\mu,\gamma-\mu}(B))}<\varepsilon/2$ 
for all $w\in I,$ since $h(w)$ is continuous with values in $\mathcal{L}(H^{s,\gamma}(B),H^{s-\mu,\gamma-\mu}(B)).$ 
Moreover, 
$$\textup{sup}_{w\in I}\|\delta(p)k-\delta(w)k\|_{\mathcal{L}(\C^d,H^{s-\mu,\gamma-\mu}(B))} \leq \textup{sup}_{w\in I}|\delta(p)-\delta(w)|\|k\|_{\mathcal{L}(\C^d,H^{s,\gamma}(B))}<\varepsilon /2$$ when we choose first $\delta(w)\in\ci_0(I)$ arbitrary, $\delta (p)>0,$ and then multiply $\delta$ by a sufficiently small constant $>0$ and denote the new $\delta$ again by $\delta.$ Thus
$$ \begin{pmatrix} h(w) & \delta(w)k \cr
            t & c \cr \end{pmatrix}:\Hsum{H^{s,\gamma}(B)}  
                              {\C^d}         \to
			\Hsum{H^{s-\mu,\gamma-\mu}(B)}    
                             {\C^d}$$
is a family of isomorphisms for all $w\in I$ for sufficiently small $\varepsilon >0.$ Analogously as before we see that 
\begin{equation} \label{23}
h(w)-\delta(w)kc^{-1}t:H^{s,\gamma}(B)\rightarrow H^{s-\mu,\gamma-\mu}(B)
\end{equation}
is a family of isomorphisms for all $w\in \Gamma_0,$ first for $w\in I,$ but then, since $h(w)$ consists of isomorphisms for $w\neq p$ and $\delta\in\ci_0(I),$ also for $w\not\in I.$\\
In this consideration we have assumed that $s\in\R$ is fixed. However, the left hand side of $\eqref{23}$ consists of a family of elliptic pseudo-differential operators on $X;$ therefore, kernel and cokernel are independent of $s,$ and hence we have isomorphisms $\eqref{23}$ for all $w\in\Gamma_0,\,s\in\R.$\\
Let us interpret $f_1(w):=\delta(w)kc^{-1}t$ as an operator-valued Mellin symbol in the covariable $w\in\Gamma_0,$ with compact support in $w$ and values in operators $\in L^{-\infty}(B,{\bf{g}}_{\gamma,\mu})$ of finite rank.\\ In a final step of the proof we modify $f_1(w)$ to obtain an element $f(w)\in M^{-\infty}_{\mathcal{O}}(B,{\bf{g}}_{\gamma,\mu})$ that approximates $f_1(w)$ in such a way that $\eqref{2iso}$ are isomorphisms for all $w\in\Gamma_0.$ First Theorem \ref{2.conv} gives us $f_{(\varepsilon)}(w):=V(\psi_{\varepsilon})f_1(w)\in M^{-\infty}_{\mathcal{O}}(B,{\bf{g}}_{\gamma,\mu}),$ and $f_{(\varepsilon)}\rightarrow f_1$ as $\varepsilon\rightarrow 0$ in the topology of $L^{-\infty}(B,{\bf{g}}_{\gamma,\mu};\Gamma_0).$ We will show that we may set $f(w)=f_{(\varepsilon)}(w)$ for any fixed sufficiently small $\varepsilon >0.$ It is evident that for any fixed compact interval $K\subset\Gamma_0$ containing the point $p$ there is an $\varepsilon(K)>0$ such that for all $0<\varepsilon<\varepsilon(K)$ we have isomorphisms $\eqref{2iso}$ for all $w\in K.$ To argue for $w\in \Gamma_0 \setminus K$ we employ Theorem \ref{2.meinv}, i.e., there is an element $h^{-1}(w) \in M^{-\mu}_S(B,{\bf{g}}_{\gamma,\mu})$ for some discrete Mellin asymptotic type $S$ with $\pi_\C S\cap\Gamma_0=\{p\}$ such that $h^{-1}(w)h(w)=1.$ This gives us  family of continuous operators
\begin{equation} \label{2map}
1-h^{-1}(w)f_{(\varepsilon)}(w):H^{s,\gamma}(B)\rightarrow H^{s,\gamma}(B)
\end{equation}
parametrised by $w\in\Gamma_0\setminus K.$ Without loss of generality we take $K$ so large that $\textup{supp}\,f_1\subseteq K_1$ for some subinterval $K_1\subset \textup{int}\,K.$ Let $\chi (w)\in \ci(\Gamma_0)$ be any function which is equal to $0$ in a neighbourhood of $\textup{supp}\,f_1$ and $1$ outside an open $U\subset\Gamma_0,\,K_1\subset U$ with $\overline U\subset K.$ Then $f_{(\varepsilon)}\rightarrow f_1$ gives us $\chi f_{(\varepsilon)}\rightarrow 0$ as $\varepsilon\rightarrow 0$ in the space $\mathcal{S}(\Gamma_0,L^{-\infty}(B,{\bf{g}}_{\gamma,\mu})).$ Using that $h^{-1}(w)$ is a parameter-dependent family of operators in $L^{-\mu}(B,{\bf{g}}_{\gamma-\mu,-\mu})$ with parameter $w\in\Gamma_0 \setminus\{p\}$ we also obtain $h^{-1}\chi f_{(\varepsilon)}\rightarrow 0$ in $\mathcal{S}(\Gamma_0,L^{-\infty}(B,{\bf{g}}_{\gamma,0})).$
It follows that there is an $\tilde{\varepsilon}(K)>0$ such that $\eqref{2map}$ are isomorphisms for all $0<\varepsilon <\tilde{\varepsilon}(K)$ and all $w\in\Gamma_0 \setminus K.$
Thus we obtain altogether isomorphisms $\eqref{2iso}$ for $f(w)=f_{(\varepsilon)}(w),\,0<\varepsilon <\textup{min}\,(\varepsilon(K),\tilde{\varepsilon}(K))$ for all $w\in\Gamma_0.$
\end{proof}
Note that an analogue of Theorem \ref{2.conv} in the simpler case of a smooth compact manifold rather than $B$ can also be deduced from the the technique of Witt \cite{Witt3}, see also the paper \cite{Mali1}.


\end{document}